\documentclass[a4paper,12pt]{article}
\usepackage{a4wide}
\usepackage{amsmath}
\usepackage{amssymb}
\usepackage{amsthm}
\usepackage{latexsym}
\usepackage{graphicx}
\usepackage[english]{babel}
\usepackage{makeidx}
\usepackage{mathtools}

\newtheorem{obs} [subsection]{Remark}

\newtheorem{prop}[subsection]{Proposition}

\newtheorem{teor}[subsection]{Theorem}

\newtheorem{cor} [subsection]{Corollary}
\DeclarePairedDelimiter\ceil{\lceil}{\rceil}

\newcommand{\pa}{p_{\mathbf a}}

\def\supp{\operatorname{supp}}

\def\Ker{\operatorname{Ker}}
\def\Gal{\operatorname{Gal}}

\def\ord{\operatorname{ord}}
\numberwithin{equation}{section}

\begin{document}
\selectlanguage{english}
\frenchspacing

\large
\begin{center}
\textbf{On the semigroup ring of holomorphic Artin L-functions}

Mircea Cimpoea\c s 
\end{center}
\normalsize

\begin{abstract}
Let $K/\mathbb Q$ be a finite Galois extension and let $\chi_1,\ldots,\chi_r$ be the irreducible characters of the Galois group $G:=Gal(K/\mathbb Q)$.
Let $f_1:=L(s,\chi_1),\ldots,f_r:=L(s,\chi_r)$ be their associated Artin L-functions. For $s_0\in \mathbb C\setminus\{1\}$, we denote
$Hol(s_0)$ the semigroup of Artin $L$-functions, holomorphic at $s_0$.
Let $\mathbb F$ be a field with $\mathbb C \subseteq \mathbb F \subseteq \mathcal M_{<1}:=$ the field of meromorphic functions of order $<1$.
We note that the semigroup ring $\mathbb F[Hol(s_0)]$ is isomorphic to a toric ring 
$\mathbb F[H(s_0)]\subseteq \mathbb F[x_1,\ldots,x_r]$, where $H(s_0)$ is an affine subsemigroup of $\mathbb N^r$ 
minimally generated by at least $r$ elements, and we describe $\mathbb F[H(s_0)]$ when the toric ideal $I_{H(s_0)}=(0)$. 
Also, we describe $\mathbb F[H(s_0)]$ and $I_{H(s_0)}$ when $f_1,\ldots,f_r$ have only simple zeros and simple poles at $s_0$.

\noindent \textbf{Keywords:} Artin L-function, Galois extension, toric ring.

\noindent \textbf{2010 Mathematics Subject
Classification:} 11R42, 16S36
\end{abstract}

\section*{Introduction}

Let $K/\mathbb Q$ be a finite Galois extension. 
For the character $\chi$ of a representation of the Galois group $G:=Gal(K/\mathbb Q)$
on a finite dimensional complex vector space, let $L(s,\chi):=L(s,\chi,K/\mathbb Q)$ be the corresponding Artin L-function (\cite[P.296]{artin2}). 
Artin conjectured that $L(s,\chi)$ is holomorphic in $\mathbb C\setminus \{1\}$ and $s=1$ is a simple pole. Brauer \cite{brauer}
proved that $L(s,\chi)$ is meromorphic in $\mathbb C$, of order $1$.
Let $\chi_1,\chi_2,\ldots,\chi_r$ be the irreducible characters of $G$ with the dimensions $d_1:=\chi_1(1)$ ,$\ldots, d_r=\chi_r(1)$.
Let $f_1:=L(s,\chi_1),\ldots,f_r:=L(s,\chi_r)$. Artin \cite[Satz 5, P. 106]{artin1} proved that $f_1,\ldots,f_r$ are multiplicatively independent.
It was proved in \cite{lucrare} that $f_1,\ldots,f_r$ are algebraically independent over $\mathbb C$. This result was extended in \cite{forum}
to the field $\mathcal M_{<1}$ of meromorphic functions of order $<1$. 

For two characters $\varphi$ and $\psi$ of $G$ it holds that $L(s,\varphi+\psi)=L(s,\varphi)+L(s,\psi)$, so the set of $L$-functions
corresponding to all characters of $G$ is a multiplicative semigroup $Ar$. Let $s_0\in \mathbb C\setminus \{1\}$ and let $Hol(s_0)$ be
the subsemigroup of $Ar$ consisting of the $L$-functions which are holomorphic at $s_0$. Artin's conjecture at $s_0$ is 
$$Hol(s_0)=Ar.$$
It was proved in \cite{numb} that $Hol(s_0)$ is an affine semigroup isomorphic to a subsemigroup $H(s_0)$ of $\mathbb N^r$. 
Let $\mathbb F$ be a field such that $\mathbb C \subseteq \mathbb F \subseteq \mathcal M_{<1}$.
From  \cite[Theorem 1]{numb} and \cite[Corollary 9]{forum}, we have the $\mathbb F$-algebra isomorphism $\mathbb F[Hol(s_0)] \cong \mathbb F[H(s_0)]$,
therefore, the Artin's conjecture at $s_0$ can be stated as
$$\mathbb F[H(s_0)] = \mathbb F[x_1,\ldots,x_r].$$
Assuming that $\mathbb F[H(s_0)]$ is minimally generated, as a $\mathbb F$-algebra, 
by a set of monomials $\{u_1,\ldots,u_m\}$, we consider the canonical epimorphism
$$\Phi:\mathbb F[t_1,\ldots,t_m] \longrightarrow \mathbb F[H(s_0)],\;\Phi(t_i)=u_i,\;1\leq i\leq m,$$
The ideal $I_{H(s_0)}:=\Ker(\Phi)$ is called the \emph{toric ideal} of $\mathbb F[H(s_0)]$, see \cite{villa} for further details.

In Proposition $1.1$, we prove that if $s_0$ is a zero of $f_1$ and is not a zero of $f_2,\ldots,f_r$, then Artin conjecture holds at $s_0$
if i) $d_1=1$ or ii) $d_1=2$ and $s_0$ is simple, i.e. $f_1'(s_0)\neq 0$. In Proposition $1.2$, we prove that $\mathbb F[H(s_0)]$ is minimally 
generated by at least $r$ monomials and, therefore, $I_{H(s_0)}=(0)$ if and only if $\mathbb F[H(s_0)]$ 
is minimally generated by $r$ monomials. 

For $1\leq j\leq r$, let $\ell_j$ be the order of $s_0$ as a zero of $f_j$.
In Theorem $1.4$, we prove that $I_{H(s_0)}=(0)$ if and only if $s_0$ is a zero of $f_1$ and is not a zero of $f_2,\ldots,f_{r-1}$
and $\ell_1|\ell_j$, $(\forall)2\leq j\leq r$. In Remark $1.5$, we describe the Hilbert series of $\mathbb F[H(s_0)]$ when $I_{H(s_0)}=(0)$.

For any $1\leq t\leq r-1$ we let
$$N_t:=\min\{\binom{r-1}{t-1}+\binom{r-2}{t}+1,\binom{r}{t}\} \text{ and }$$
$$L_t:=|\{\supp(v)\;:\;v\in \mathbb F[H(s_0)] \text{ monomial },\;|\supp(v)|=t\}|.$$
where $\supp(v):=\{x_j\;:\;x_j|v\}$ is the \emph{support} of the monomial $v$.
In Theorem $1.6$ we prove that Artin Conjecture is true at $s_0$ if and only if $I_{H(s_0)}=(0)$ and
there exists $1\leq t\leq r-1$ such that $L_t\geq N_t$.

In Corollary $1.7$, we reprove the main results of \cite{numb} and \cite{numb2}. In Corollary $1.8$
 we prove that if the zeros of $f_1,\ldots,f_r$ are simple
and distinct, Artin's conjecture holds if and only if $\mathbb F[H(s_0)]$ satisfies several equivalent conditions.
Under the same hypothesis, 
using a result from \cite{monat}, in Corollary $1.10$ we prove that Artin's conjecture holds if 
the group $G$ is \emph{almost monomial} (as it was defined in \cite{monat}). 

In Proposition $1.11$, we describe $\mathbb F[H(s_0)]$ when the zeros at $s_0$ of $f_1,\ldots,f_r$, if any, are simple.
In Proposition $1.12$, we describe $\mathbb F[H(s_0)]$ when the poles at $s_0$ of $f_1,\ldots,f_r$, if any, are simple.
In Theorem $1.13$ we describe $\mathbb F[H(s_0)]$ and $I_{H(s_0)}$ if $f_1,\ldots,f_r$ have at most simple zeros and simple poles.

A \emph{virtual character} $\chi$ of the group $G=\Gal(K/\mathbb Q)$ is a formal sum
$$\chi := a_1\chi_1 + a_2\chi_2 + \cdots + a_r\chi_r,\;a_j\in\mathbb Z,\;1\leq j\leq r.$$
We associate to $\chi$ the meromorphic function
$L(s,\chi):=f_1^{a_1}f_2^{a_2}\cdots f_r^{a_r}$. The theory of virtual characters is an useful tool in studying L-Artin functions,
see for instance \cite{amit}, \cite{rhoades} and \cite{wong}.

We note that $\overline{Ar}:=\{L(s,\chi)\;:\;\chi \text{ a virtual character of }G \}  $
is a multiplicative group, isomorphic to $(\mathbb Z^r,+)$.
We let $\overline{Hol(s_0)}:=\{L(s,\chi)\in\overline{Ar}\;:\; L(s,\chi)$ holomorphic in $s_0\}$ and we prove that 
$\overline{Hol(s_0)}$ is an affine monoid isomorphic to a submonoid $\overline{H(s_0)}\subset \mathbb Z^r$. In Proposition $2.2$
and Proposition $2.3$ we describe the toric ring $\mathbb F[\overline{H(s_0)}] \subset \mathbb F[x_1^{\pm 1},\ldots,x_r^{\pm 1}]$ when $I_{H(s_0)}=(0)$, respectively when $f_1,\ldots,f_r$
have simple zeros and poles.

\section{Main results}

Using the notations from the Introduction, from \cite[Theorem 1]{numb} and \cite[Corollary 9]{forum}, it follows that the map
\begin{equation}\label{lab1}
 \mathbb F[x_1,\ldots,x_r] \longrightarrow \mathbb F[f_1,\ldots,f_r],\;x_i\mapsto f_i,\;(\forall)1\leq i\leq r.
\end{equation}
is an isomorphism of $\mathbb F$-algebras.
If $\chi$ is a character of $G$, then there exists some nonnegative integers $a_1,\ldots,a_r$ such that $\chi=a_1\chi_1+\cdots+a_r\chi_r$.
It hold that \cite[P. 297, (10)]{artin2}
$$L(s,\chi) = f_1^{a_1}\cdots f_r^{a_r}.$$
Let $s_0\in\mathbb C\setminus\{1\}$. We let $Hol(s_0)$ be the semigroup consisting of the $L$-functions which are holomorphic at $s_0$.
It was proved in \cite[Theorem 1]{numb} that $Hol(s_0)$ is an affine semigroup isomorphic to a subsemigroup $H(s_0)$ of $\mathbb N^r$.
More precisely, the isomorphism is given by
\begin{equation}\label{lab2}
Hol(s_0) \longrightarrow H(s_0),\; f=f_1^{a_1}\cdots f_r^{a_r}\mapsto (a_1,\ldots,a_r). 
\end{equation}
According to (\ref{lab1}) and (\ref{lab2}), it follows that
$\mathbb F[H(s_0)] \subseteq \mathbb F[x_1,\ldots,x_r]$ is a toric ring. Moreover, the Artin Conjecture is equivalent to 
\begin{equation}
\mathbb F[H(s_0)] = \mathbb F[x_1,\ldots,x_r],\;(\forall)s_0\in\mathbb C\setminus\{1\}. 
\end{equation}
Note that the (minimal) generators of the semigroup $H(s_0)$ corespond to the (minimal) monomial generators of the $\mathbb F$-algebra, $\mathbb F[H(s_0)]$.

For a meromorphic function $f$, we denote $\ord_{s=s_0}f$ the order of $s_0$ as a zero of $f$. By abuse of notation, we denote 
$$\ell_j:=\ord(x_j):=\ord_{s=s_0}f_j, \; 1\leq j\leq r, \text{ and }$$
$$\ord(x_1^{a_1}\cdots x_r^{a_r}):=a_1\ord(x_1)+\cdots +a_r\ord(x_r).$$
Since a $L$-function $f=L(s,\chi)$ is holomorphic at $s_0$, if and only if $\ord_{s=s_0}f\geq 0$, \
it follows that a monomial $u$ is in $\mathbb F[H(s_0)]$ if and only if $\ord(u)\geq 0$. Hence
\begin{equation}\label{lab4}
\mathbb F[H(s_0)] = \mathbb F[x_1^{a_1}\cdots x_r^{a_r}\;:\;a_1\ell_1+\cdots+a_r\ell_r\geq 0].
\end{equation}
The character $\chi:=d_1\chi_1+\cdots+d_r\chi_r$ is called the \emph{regular character} of $G$.
It holds that $$\zeta_K(s):=L(s,\chi)=f_1^{d_1}\cdots f_r^{d_r},$$ the Dedekind zeta function of $K$, is holomorphic at $s_0$, hence 
\begin{equation}\label{cur1}
u:=x_1^{d_1}\cdots x_r^{d_r}\in \mathbb F[H(s_0)].
\end{equation}
Now, assume that the Artin conjecture fails at $s_0$, i.e. at least one of the $\ell_j$'s is negative.
Since $x_1^{d_1}\cdots x_r^{d_r}\in \mathbb F[H(s_0)]$, it follows that at least one of the $\ell_j$'s is positive.
By reordering the characters $\chi_1,\ldots,\chi_r$, we can assume that there exists $1\leq p < q\leq r$ such that 
\begin{equation}\label{lab5}
\ell_1,\ldots,\ell_p>0,\;\ell_{p+1},\ldots,\ell_{q}<0,\;\ell_{q+1}=\cdots=\ell_r=0.
\end{equation}
From (\ref{lab4}) and (\ref{lab5}) it follows that
\begin{equation}\label{lab6}
 \mathbb F[H(s_0)] = \mathbb F[x_1^{a_1}\cdots x_r^{a_r}\;:\;a_1\ell_1+\cdots+a_p\ell_p \geq  a_{p+1}|\ell_{p+1}|+\cdots+a_q|\ell_q| ].
\end{equation}
By a theorem of Stark (\cite[Theorem 3]{stark}), if $\ord(u)=\ord_{s=s_0}(\zeta_K)\leq 1$, then $f_1,\ldots,f_r$ are holomorphic at $s_0$,
a contradiction. Therefore, from  (\ref{cur1}) and (\ref{lab5}) it follows that
\begin{equation}\label{cur3}
d_1\ell_1+\cdots+d_p\ell_p \geq d_{p+1}|\ell_{p+1}| + \cdots + d_{q}|\ell_q| + 2.
\end{equation}
Moreover, according to a theorem of Rhoades (\cite[Theorem 2]{rhoades}), it follows that 
\begin{equation}\label{cur2}
u_j^{\pm}:=x_j^{\pm 1}u \in \mathbb F[H(s_0)],\;(\forall)1\leq j\leq r. 
\end{equation}
From (\ref{cur1}), (\ref{lab5}) and (\ref{cur2}) it follows that
\begin{equation}\label{labX}
d_1\ell_1 + \cdots + d_p \ell_p \geq d_{p+1}|\ell_{p+1}| + \cdots + d_q|\ell_q| + \max\{|\ell_j|\;: 1\leq j\leq r\}.
\end{equation}

\begin{prop}
Artin conjecture holds at $s_0$ in the following cases:
\begin{enumerate}
\item[(1)] $s_0$ is a zero of $f_1$ and is not a zero of $f_2,\ldots,f_r$ and $d_1=1$.
\item[(2)] $s_0$ is a simple zero of $f_1$ and is not a zero of $f_2,\ldots,f_r$ and $d_1=2$. 
\end{enumerate}
\end{prop}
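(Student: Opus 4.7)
The plan is to argue by contradiction in both cases, exploiting the inequalities \eqref{cur3} and \eqref{labX} already in hand from Stark's and Rhoades's theorems. I would begin by assuming that Artin's conjecture fails at $s_0$. Since by hypothesis $s_0$ is a zero of $f_1$ (giving $\ell_1>0$) and is not a zero of $f_2,\ldots,f_r$ (giving $\ell_j\leq 0$ for $j\geq 2$), the reordering in \eqref{lab5} must take $p=1$, and the failure of the conjecture forces at least one $\ell_j<0$, so $q\geq 2$.

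For case (2), I would substitute $p=1$, $d_1=2$, and $\ell_1=1$ into \eqref{cur3}; the left-hand side collapses to $2$, yielding
$$2\;\geq\;d_2|\ell_2|+\cdots+d_q|\ell_q|+2,$$
so that $d_2|\ell_2|+\cdots+d_q|\ell_q|\leq 0$. Since each summand is a product of positive integers and the sum is nonempty (as $q\geq 2$), this is the desired contradiction.

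For case (1), the inequality \eqref{cur3} alone is insufficient, because $\ell_1$ is not a priori bounded above; here I would appeal to \eqref{labX} instead, with $p=1$ and $d_1=1$:
$$\ell_1\;\geq\;d_2|\ell_2|+\cdots+d_q|\ell_q|+\max\{|\ell_j|\,:\,1\leq j\leq r\}.$$
Using $\max\{|\ell_j|\}\geq|\ell_1|=\ell_1$ and cancelling $\ell_1$ from both sides, I again obtain $d_2|\ell_2|+\cdots+d_q|\ell_q|\leq 0$, contradicting $q\geq 2$. No step looks like a genuine obstacle; the only point worth flagging is that the two cases are distinguished precisely by whether the coarser Stark bound $\ord_{s=s_0}\zeta_K\geq 2$ or the sharper Rhoades bound $\ord_{s=s_0}\zeta_K\geq\max_j|\ell_j|$ is required to close the argument.
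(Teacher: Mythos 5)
Your proof is correct and follows essentially the same route as the paper: case (1) via inequality \eqref{labX} (Rhoades) using $\max_j|\ell_j|\geq\ell_1$, and case (2) via inequality \eqref{cur3} (Stark) with $d_1\ell_1=2$, each time reducing to $d_2|\ell_2|+\cdots+d_q|\ell_q|\leq 0$, which contradicts $q\geq 2$. The only cosmetic difference is that you spell out the normalization $p=1$ explicitly, which the paper leaves implicit.
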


\begin{proof}
(1) Assume that Artin conjecture fails at $s_0$.
According to (\ref{labX}), it follows that
$\ell_1 \geq d_{p+1}|\ell_{p+1}| + \cdots + d_q|\ell_q| + \ell_1 > \ell_1$,
a contradiction.

(2) Assume that Artin conjecture fails at $s_0$.
    According to (\ref{cur3}), it follows that
$$2 = d_1 \geq d_2|\ell_2| + \cdots + d_r|\ell_r| + 2 > 2,$$
a contradiction. The conclusion follows also from \cite[Theorem 1]{monat}.
\end{proof}

\begin{obs}
\emph{According to a a result of Amitsur (\cite[Theorem 1]{amit}) $d_1,\ldots,d_r\leq 2$ if and only if either: 
i) $G$ is abelian, ii) $G$ has an abelian subgroup of order $2$ or ii) $G/Z(G)$ is an abelian $2$-group of order $8$, where $Z(G)$ is the center of $G$.
In this context, if $f_1,\ldots,f_r$ have simple zeros distinct, then, according to Proposition $1.1$, Artin conjecture holds.}
\end{obs}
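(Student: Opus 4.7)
The plan is to verify Artin's conjecture pointwise: for every $s_0 \in \mathbb C\setminus\{1\}$, show that $\mathbb F[H(s_0)] = \mathbb F[x_1,\ldots,x_r]$ by reducing to one of the two alternatives in Proposition $1.1$.

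First, I would fix $s_0\in\mathbb C\setminus\{1\}$ and record the local vanishing data $\ell_j=\ord_{s=s_0}(f_j)$. The hypothesis that the zeros of $f_1,\ldots,f_r$ are \emph{simple and distinct} encodes two conditions: each $\ell_j\in\{0,1\}$ (no multiple zeros), and at most one index $j$ satisfies $\ell_j=1$ (no coincidences among the zero loci). Reading the hypothesis as implying furthermore that the $f_j$ have no poles in $\mathbb C\setminus\{1\}$ (otherwise Proposition $1.1$ could not be invoked), this leaves exactly two local scenarios at the chosen $s_0$.

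Second, I would dispatch these two cases. \textbf{Case A:} all $\ell_j=0$. Then every $f_j$ is holomorphic and nonvanishing at $s_0$, hence every $L$-function $f_1^{a_1}\cdots f_r^{a_r}$ lies in $Hol(s_0)$, giving $H(s_0)=\mathbb N^r$ trivially. \textbf{Case B:} after reindexing, $\ell_1=1$ and $\ell_2=\cdots=\ell_r=0$. By Amitsur's classification recalled in the remark, $d_1\le 2$. If $d_1=1$, Proposition $1.1$\,(1) applies directly; if $d_1=2$, then since $\ell_1=1$ the zero is simple and Proposition $1.1$\,(2) applies. In either subcase $\mathbb F[H(s_0)]=\mathbb F[x_1,\ldots,x_r]$.

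Since $s_0\in\mathbb C\setminus\{1\}$ was arbitrary, Artin's conjecture holds. The only real obstacle is interpretive rather than technical, namely pinning down that "simple zeros, distinct" both forbids multiplicities $\ge 2$ and forbids two of the $f_j$ sharing a zero — precisely the configuration that matches the hypothesis of Proposition $1.1$ at every $s_0$. Once that reading is accepted, no new ideas beyond Proposition $1.1$ and Amitsur's theorem are needed.
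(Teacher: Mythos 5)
The argument has a genuine gap, located exactly at the step you flag as "interpretive rather than technical." Reading the hypothesis as implying that $f_1,\ldots,f_r$ have no poles in $\mathbb C\setminus\{1\}$ is not an admissible reading: Artin's conjecture at $s_0$ \emph{is} the assertion that $f_1,\ldots,f_r$ have no pole at $s_0$, so assuming "no poles" turns the remark into a tautology and makes Proposition $1.1$ and Amitsur's theorem irrelevant. Your stated reason for the extra assumption — that "otherwise Proposition $1.1$ could not be invoked" — is a misreading of the proposition. Its hypothesis asks only that $s_0$ be a zero of $f_1$ and not a zero of $f_2,\ldots,f_r$; it says nothing about poles of $f_2,\ldots,f_r$, and indeed its proof runs by contradiction starting from the assumption that some $\ell_j<0$. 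The proposition exists precisely to exclude poles, not to presuppose their absence.

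The correct case split at a fixed $s_0$ is the following. If no $f_j$ vanishes at $s_0$, i.e. $\ell_j\le 0$ for all $j$, then the holomorphy at $s_0$ of $\zeta_K=f_1^{d_1}\cdots f_r^{d_r}$ gives $\sum_j d_j\ell_j\ge 0$, which together with $d_j>0$ forces $\ell_j=0$ for all $j$; Artin holds trivially and no poles could have existed. Otherwise some $f_j$ vanishes at $s_0$; by "simple" and "distinct," after reindexing exactly $f_1$ vanishes, with $\ell_1=1$, while $\ell_j\le 0$ for $j\ge 2$ (poles \emph{a priori} allowed). This is exactly the hypothesis of Proposition $1.1$, and since $d_1\le 2$ by Amitsur, part (1) or (2) applies. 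Your Case B, which requires $\ell_2=\cdots=\ell_r=0$, is what you need to \emph{conclude}, not what you may assume.
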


\begin{prop} We have that:
\begin{enumerate}
\item[(1)] The minimal number of generators of $\mathbb F[H(s_0)]$ is $r$. 
\item[(2)] $I_{H(s_0)}=(0)$ if and only if $\mathbb F[H(s_0)]$ is minimally generated by $r$ monomials.
\end{enumerate}
\end{prop}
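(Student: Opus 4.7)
For part (1), my plan is to show that the group $\mathbb Z H(s_0)\subseteq\mathbb Z^r$ generated by the affine semigroup $H(s_0)$ is all of $\mathbb Z^r$. Since any semigroup generating set of an affine monoid $H\subset\mathbb N^r$ must generate its group $\mathbb Z H$ as a group, and since $\mathbb Z^r$ as a free abelian group of rank $r$ cannot be generated by fewer than $r$ elements, this will give the bound. To show $\mathbb Z H(s_0)=\mathbb Z^r$, I combine the two pieces of information already assembled in the excerpt: by (\ref{cur1}) the lattice point $(d_1,\ldots,d_r)$ lies in $H(s_0)$, and by Rhoades's theorem, recorded as (\ref{cur2}), the point $(d_1,\ldots,d_r)+e_j$ also lies in $H(s_0)$ for each $1\le j\le r$. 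Subtracting the first from the second shows $e_j\in\mathbb Z H(s_0)$ for every $j$, whence $\mathbb Z H(s_0)=\mathbb Z^r$ and part (1) follows (the equivalence between semigroup generators of $H(s_0)$ and monomial generators of $\mathbb F[H(s_0)]$ was already noted right after (\ref{lab2})).

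For part (2), the plan is to compute $\dim \mathbb F[H(s_0)]$ and compare with the length $m$ of a minimal monomial system of generators. First, $\mathbb F[H(s_0)]\subseteq\mathbb F[x_1,\ldots,x_r]$ is a finitely generated $\mathbb F$-domain, and the containments $u=x_1^{d_1}\cdots x_r^{d_r}\in\mathbb F[H(s_0)]$ and $u x_j\in\mathbb F[H(s_0)]$ imply $x_j=(ux_j)/u$ lies in the field of fractions of $\mathbb F[H(s_0)]$. Hence this field of fractions coincides with $\mathbb F(x_1,\ldots,x_r)$, which has transcendence degree $r$ over $\mathbb F$, so $\dim\mathbb F[H(s_0)]=r$. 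Now if $\mathbb F[H(s_0)]$ is minimally generated by $m$ monomials $u_1,\ldots,u_m$, the presentation $\Phi:\mathbb F[t_1,\ldots,t_m]\twoheadrightarrow\mathbb F[H(s_0)]$ gives $\mathbb F[H(s_0)]\cong\mathbb F[t_1,\ldots,t_m]/I_{H(s_0)}$. If $I_{H(s_0)}=(0)$ then $\Phi$ is an isomorphism and dimension forces $m=r$. Conversely, if $m=r$, then $\Phi$ is a surjection between two $\mathbb F$-domains of the same Krull dimension $r$, so its kernel is a prime ideal of height $0$ in a polynomial ring, hence zero.

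The main obstacle is the lower bound $\dim\mathbb F[H(s_0)]\ge r$: a priori $\mathbb F[H(s_0)]$ could be a very ``thin'' subring of $\mathbb F[x_1,\ldots,x_r]$ containing few algebraically independent elements. The clean way past this obstacle is precisely Rhoades's theorem, which guarantees that for every $j$ the monomial $u x_j$ lies in $\mathbb F[H(s_0)]$; this is exactly what is needed to recover each variable $x_j$ in the fraction field and hence transcendence degree $r$. Everything else in the proposition reduces to standard dimension-theoretic comparisons of a surjection from a polynomial ring onto a toric domain.
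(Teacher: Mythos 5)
Your proof is correct, but it takes a genuinely different route from the paper's. For part (1), the paper constructs explicit minimal monomial generators: after reordering to the normal form $\ell_1,\ldots,\ell_p>0$, $\ell_{p+1},\ldots,\ell_q<0$, $\ell_{q+1}=\cdots=\ell_r=0$, it shows that $x_1,\ldots,x_p$, $x_1^{m_{p+1}}x_{p+1},\ldots,x_1^{m_q}x_q$ (with $m_j=\lceil -\ell_j/\ell_1\rceil$) and $x_{q+1},\ldots,x_r$ all appear among any minimal generating set, giving the lower bound $r$ directly. You instead argue abstractly that $\mathbb Z H(s_0)=\mathbb Z^r$, combining $(d_1,\ldots,d_r)\in H(s_0)$ with Rhoades's theorem, and then invoke the fact that a free abelian group of rank $r$ needs at least $r$ generators. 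Your argument is cleaner and avoids the case split, but it leans on Rhoades's theorem, which the paper's proof of this particular proposition does not need (the paper only uses (1.5), itself a consequence of the holomorphy of $\zeta_K$ at $s_0$). For part (2), the two arguments are closer in spirit — both ultimately rest on Krull dimension / transcendence degree — but again the paper gets $\dim\mathbb F[H(s_0)]=r$ for free from its explicit generators (whose exponent matrix is unimodular, hence they are algebraically independent), whereas you derive it from the fraction-field computation enabled by Rhoades. Your converse direction (height of the kernel is $0$ by the dimension formula for catenary rings, and a height-$0$ prime in a domain is zero) is a clean way of packaging the paper's remark that a polynomial ring in $>r$ variables cannot embed in $\mathbb F[x_1,\ldots,x_r]$. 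Both proofs are valid; the paper's is more elementary and self-contained at this point in the exposition, while yours is more conceptual and immediately visible without the normalization (1.5).
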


\begin{proof}
(1) We use a similar argument as in the first part of the proof of \cite[Theorem 1]{numb}. If Artin conjecture holds, i.e. 
$\mathbb F[H(s_0)] = \mathbb F[x_1,\ldots,x_r]$, then the assertion is obvious.
Assume this is not the case. Let $1\leq p< q\leq r$ as in (\ref{lab5}). For any $p+1\leq j\leq q$, we let  
$m_j:=\ceil{\frac{-\ell_j}{\ell_1}}$. It follows that
$x_1^{m_j}x_j \in \mathbb F[H(s_0)],\; x_1^{m_j-1}x_j \notin \mathbb F[H(s_0)],\;(\forall) p+1\leq j\leq q$.
Therefore,  $x_1,\ldots,x_p,x_1^{m_{p+1}}x_{p+1},\ldots,x_1^{m_{q}}x_{q}, x_{q+1},\ldots,x_r$ are minimal monomial 
generators of $\mathbb F[H(s_0)]$. Hence $\mathbb F[H(s_0)]$ is minimally generated by at least $r$ monomials.

(2) If $\mathbb F[H(s_0)] = \mathbb F[x_1,\ldots,x_r]$, then there is nothing to prove. If this is not the case, as above,
$x_1,\ldots,x_p,x_1^{m_{p+1}}x_{p+1},\ldots,x_1^{m_{q}}x_{q}, x_{q+1},\ldots,x_r$ is part of the set of minimal generators of 
$\mathbb F[H(s_0)]$. If $\mathbb F[H(s_0)]$ is minimally generated by $r$ monomials, it follows that
$$\mathbb F[H(s_0)] = \mathbb F[x_1,\ldots,x_p,x_1^{m_{p+1}}x_{p+1},\ldots,x_1^{m_{q}}x_{q}, x_{q+1},\ldots,x_r] \cong \mathbb F[t_1,\ldots,t_r],$$
thus $I_{H(s_0)}=(0)$. Conversely, if the number of minimal monomial generators of $\mathbb F[H(s_0)]$ is $>r$, then $I_{H(s_0)}\neq (0)$, otherwise
$\mathbb F[H(s_0)]$ which is a $\mathbb F$-subalgebra of $\mathbb F[x_1,\ldots,x_r]$ would be isomorphic with a polynomial algebra in $>r$ indeterminates, a contradiction.
\end{proof}

\begin{teor}
If Artin conjecture does not hold at $s_0$ then the following are equivalent:
\begin{enumerate}
\item[(1)] $s_0$ is a simple zero of $f_1$ and is not a zero of $f_2,\ldots,f_r$ and $\ell_1 | \ell_j$, $(\forall)2\leq j\leq r$.
\item[(2)] $I_{H(s_0)}=(0)$.
\end{enumerate}
Moreover, if one the above conditions holds, there exists $2\leq q\leq r$ such that
$$\mathbb F[H(s_0)] = \mathbb F[x_1,x_1^{m_{2}}x_{2},\ldots, x_1^{m_q}x_q,x_{q+1},\ldots,x_r],$$
where $m_j:=-\frac{\ell_j}{\ell_1}$, $2\leq j\leq q$.
\end{teor}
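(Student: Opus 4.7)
The plan is to handle the two directions separately and let the moreover description fall out of the forward direction. For $(1)\Rightarrow(2)$, I show directly that the semigroup $H(s_0)$ is freely generated, as a monoid, by the $r$ vectors $e_1,\ m_2 e_1+e_2,\ \ldots,\ m_q e_1+e_q,\ e_{q+1},\ \ldots,\ e_r$, where $m_j:=-\ell_j/\ell_1\in\mathbb N$ (an integer by the divisibility hypothesis in~(1)). Since $p=1$ under~(1), the defining inequality of $H(s_0)$ reads $a_1\ell_1\geq\sum_{j=2}^q a_j|\ell_j|=\ell_1\sum_{j=2}^q a_j m_j$, so the substitution $b_1:=a_1-\sum_{j=2}^q a_j m_j\geq 0$ identifies $H(s_0)$ with $\mathbb N^r$. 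The $r$ corresponding monomials are exactly those in the moreover display, and they are algebraically independent (their exponent matrix is upper unitriangular up to relabelling), so $\mathbb F[H(s_0)]$ is a polynomial $\mathbb F$-algebra in $r$ indeterminates and $I_{H(s_0)}=(0)$.

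For $(2)\Rightarrow(1)$ I invoke Proposition~1.2(2): $I_{H(s_0)}=(0)$ forces $\mathbb F[H(s_0)]$ to have exactly $r$ minimal monomial generators, and the list $x_1,\ldots,x_p,\ x_1^{m_{p+1}}x_{p+1},\ldots,x_1^{m_q}x_q,\ x_{q+1},\ldots,x_r$ produced in the proof of Proposition~1.2(1) already has length $r$, hence is the complete set of minimal generators. The strategy is to manufacture an ``extra'' irreducible monomial under each possible failure of~(1) and obtain a contradiction with this count.

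To force $p=1$: if $p\geq 2$, pick any $p+1\leq j\leq q$ and set $n_j:=\ceil{-\ell_j/\ell_2}$. Then $x_2^{n_j}x_j\in\mathbb F[H(s_0)]$ but $x_2^{n_j-1}x_j\notin\mathbb F[H(s_0)]$, and any decomposition $x_2^{n_j}x_j=uv$ inside $H(s_0)$ can involve only the variables $x_2,x_j$; since the $x_j$-exponent of each factor is $0$ or $1$, the order inequality at $s_0$ forces $u=1$ or $v=1$. Thus $x_2^{n_j}x_j$ is a minimal generator, manifestly absent from the list above. To force $\ell_1\mid\ell_j$: assume $p=1$ but $\ell_1\nmid|\ell_j|$ for some $2\leq j\leq q$, and set $d:=\gcd(\ell_1,|\ell_j|)$, $\ell_1':=\ell_1/d$, $\ell_j':=|\ell_j|/d$. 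Then $\gcd(\ell_1',\ell_j')=1$ and $\ell_1'\geq 2$, and $x_1^{\ell_j'}x_j^{\ell_1'}$ has order $0$ at $s_0$, hence lies in $H(s_0)$. Any nontrivial decomposition forces each factor to have order $0$; writing a factor as $x_1^a x_j^b$ gives $a\ell_1'=b\ell_j'$, which by coprimality restricts $(a,b)$ to $(0,0)$ or $(\ell_j',\ell_1')$. Hence $x_1^{\ell_j'}x_j^{\ell_1'}$ is irreducible, and since its $x_j$-exponent is $\ell_1'\geq 2$ it is absent from the list.

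The main obstacle is the irreducibility check for the two candidate extra generators $x_2^{n_j}x_j$ and $x_1^{\ell_j'}x_j^{\ell_1'}$: one must rule out every decomposition into two nonconstant monomials that individually belong to $H(s_0)$. Both reductions rest on the single invariant $\ord_{s=s_0}$, supplemented by the coprimality of $\ell_1'$ and $\ell_j'$ in the second case. Once these irreducibility claims are in hand, both implications together with the explicit description of $\mathbb F[H(s_0)]$ follow at once.
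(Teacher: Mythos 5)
Your proof is correct and follows essentially the same path as the paper's: both directions rest on Proposition~1.2(2) exactly as in the paper, and your forward direction's explicit identification $H(s_0)\cong\mathbb N^r$ via $b_1:=a_1-\sum_{j=2}^q m_ja_j$ is just a repackaging of the paper's rewriting $u=x_1^{\ord(u)/\ell_1}\prod_j(x_1^{m_j}x_j)^{a_j}$. The only real variation is in $(2)\Rightarrow(1)$: where the paper exhibits a monomial in $\mathbb F[H(s_0)]$ that fails to lie in the subring $R$ generated by the $r$ canonical monomials, you instead exhibit an \emph{irreducible} monomial absent from the $r$-element generating list and contradict the minimal-generator count; this forces you to verify irreducibility (hence your passage to the primitive pair $\ell_1',\ell_j'$ so that coprimality does the work), whereas the paper's route needs only the exponent inequalities defining membership in $R$ — both reformulations are valid and essentially equivalent.
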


\begin{proof}
$(1)\Rightarrow (2)$ According to (\ref{lab5}), there exists
$1 < q \leq r$ such that $f_2,\ldots,f_q$ have poles at $s_0$. Let $m_j:=-\frac{\ell_j}{\ell_1}$, $2\leq j\leq q$.
It follows that 
$$\ord(x_1^{m_j}x_j)=0,\; (\forall)2\leq j\leq q,$$
hence $x_1^{m_j}x_j\in \mathbb F[H(s_0)],  (\forall)2\leq j\leq q$. We prove that 
\begin{equation}\label{kaka}
\mathbb F[H(s_0)] = \mathbb F[x_1,x_1^{m_{2}}x_{2},\ldots, x_1^{m_q}x_q,x_{q+1},\ldots,x_r]=:R.
\end{equation}
If $u=x_1^{a_1}x_2^{a_2}\cdots x_r^{a_r}\in \mathbb F[H(s_0)]$, then 
$$\ord(u) = \ell_1 a_1 - \ell_{2}a_{2} - \cdots  - \ell_q a_q \geq 0,$$
therefore $$u = x_1^{\ord(u)}(x_{2}x_1^{m_{2}})^{a_{2}} \cdots (x_{q}x_1^{m_{q}})^{a_{q}}x_{q+1}^{a_{q+1}}\cdots x_r^{a_r}
\in R,$$ as required. From \eqref{kaka} and Proposition $1.3(2)$ it follows that $I_{H(s_0)}=(0)$.

$(2)\Rightarrow (1)$ We use a similar argument as in the proof of \cite[Theorem 2]{numb}. 
If $I_{H(s_0)}=(0)$ then, as in the proof of Proposition $1.3(2)$, it follows that
$$\mathbb F[H(s_0)] = \mathbb F[x_1,\ldots,x_p,x_1^{m_{p+1}}x_{p+1},\ldots,x_1^{m_{q}}x_{q}, x_{q+1},\ldots,x_r]=:R,$$
where $m_j:=\ceil{\frac{-\ell_j}{\ell_1}}$, $p+1\leq j\leq q$. 
If $p\geq 2$, i.e. $f_2(s_0)=0$, then by letting $n_j:=\ceil{\frac{-\ell_j}{\ell_2}}$, $p+1\leq j\leq q$, 
it follows that $x_2^{n_{p+1}}x_{p+1},\ldots,x_2^{n_{q}}x_q \in \mathbb F[H(s_0)]$, a contradiction.
If $p=1$ and $\ell_1\nmid \ell_2$, then $\ord(x_1^{m_1}x_2)>0$. On the other hand $\ord(x_1^{-\ell_2}x_2^{\ell_1}) = 0$, hence
$x_1^{-\ell_2}x_2^{\ell_1}\in \mathbb F[H(s_0)]$ and $x_1^{-\ell_2}x_2^{\ell_1}\notin R$, a contradiction.
\end{proof}

\begin{obs}
\emph{
Assume that $I_{H(s_0)}=(0)$ and the Artin conjecture does not hold at $s_0$. 
According to Theorem $1.4$, there exists $2\leq q\leq r$ such that
$$\mathbb F[H(s_0)] = \mathbb F[x_1,x_1^{m_{2}}x_{2},\ldots, x_1^{m_q}x_q,x_{q+1},\ldots,x_r],$$
where $m_j = -\frac{\ell_j}{\ell_1}$, $2\leq j\leq q$. It follows that the map \small{
\begin{equation}\label{kaaka}
\Phi: \mathbb F[t_1,\ldots,t_r] \longrightarrow \mathbb F[H(s_0)],\; \Phi(t_j) := x_1^{m_j}x_j,\;2\leq j\leq q,\;
 \Phi(t_j):=x_j,\; j=1 \text{ and } q+1\leq j\leq r,
\end{equation}}
is a graded isomorphism of $\mathbb F$-algebras, where $\mathbb F[t_1,\ldots,t_r]$ has the (non-standard) grading 
given by $\deg(t_j)=m_j+1$, $2\leq j\leq q$, $\deg(t_j)=1$, $j=1$ and $q+1\leq j\leq r$.
Moreover, from (\ref{cur3}) and (\ref{labX}) it follows that
$d_1 \geq d_2m_2 + \cdots +d_qm_q+\max\{1,\lceil \frac{2}{\ell_1} \rceil\}$.}

\emph{
Let $\mathbf a = (a_1,\ldots,a_r)$ be a vector of positive integers and let $n\geq 0$ be an integer.
$$p_{\mathbf a}(n):=\text{ the number of integer solutions of } a_1x_1+\cdots a_rx_r =n \text{ with } x_i\geq 0, 
1\leq i\leq r, $$
is called the \emph{restricted partition function} associated to $\mathbf a$.}

\emph{
It is well known that if $S=K[t_1,\ldots,t_r]$ is the ring of polynomials with the grading given by $\deg(t_j)=a_j$, $1\leq j\leq r$,
then the \emph{Hilbert function} of $S$ is 
\begin{equation}\label{kaaaa}
 H(S,n)=\pa(n),\;n\geq 0,
\end{equation}
and the \emph{Hilbert series} of $S$ is
\begin{equation}\label{kakaa}
 H_S(t)=\prod_{j=1}^r \frac{1}{1-t^{a_j}}.
\end{equation}
From (\ref{kaaka}), (\ref{kaaaa}) and (\ref{kakaa}), it follows that the Hilbert function and the Hilbert series of $\mathbb F[H(s_0)]$ are:
$$ H(\mathbb F[H(s_0)],n)= p_{(1,m_2+1,\ldots,m_q+1,1,\ldots,1)}(n),\; n\geq 1,$$
$$ H_{\mathbb F[H(s_0)]}(t)=\frac{1}{(1-t^{m_2+1})\cdots (1-t^{m_q+1})(1-t)^{r-q+1}}.$$}
\end{obs}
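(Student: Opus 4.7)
The plan is to verify three assertions contained in Remark $1.5$: (i) that $\Phi$ is a graded isomorphism with the prescribed non-standard grading, (ii) the arithmetic inequality on $d_1$, and (iii) the formulas for the Hilbert function and series of $\mathbb F[H(s_0)]$.

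For (i), I would invoke Theorem $1.4$ to get the explicit presentation $\mathbb F[H(s_0)] = \mathbb F[x_1, x_1^{m_2}x_2, \ldots, x_1^{m_q}x_q, x_{q+1}, \ldots, x_r]$ with $m_j = -\ell_j/\ell_1$. Surjectivity of $\Phi$ is then immediate. Injectivity is exactly the content of $I_{H(s_0)}=(0)$ via Proposition $1.3(2)$. To establish gradedness, equip $\mathbb F[x_1,\ldots,x_r]$ with its standard total-degree grading and note that $\deg(x_1^{m_j}x_j)=m_j+1$ while $\deg(x_j)=1$ otherwise; pulling back this grading along $\Phi$ forces $\deg(t_j)=m_j+1$ for $2\le j\le q$ and $\deg(t_j)=1$ for $j=1$ or $q+1\le j\le r$, which matches the stated grading.

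For (ii), the idea is to substitute the equalities $|\ell_j| = m_j \ell_1$ (valid for $2\le j\le q$ since $\ell_1\mid\ell_j$) and $\ell_j=0$ for $j>q$ into (\ref{cur3}), then divide by the positive integer $\ell_1$ to obtain $d_1 \ge d_2 m_2 + \cdots + d_q m_q + 2/\ell_1$. Since the left side is an integer, this upgrades to $d_1 \ge d_2 m_2 + \cdots + d_q m_q + \lceil 2/\ell_1 \rceil$. A parallel treatment of (\ref{labX}), using $|\ell_1|/\ell_1=1$, gives $d_1 \ge d_2 m_2 + \cdots + d_q m_q + \max\{1, m_2,\ldots,m_q\}$; taking the maximum of the two resulting lower bounds yields the claimed $\max\{1, \lceil 2/\ell_1\rceil\}$ term.

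For (iii), I would apply the standard fact that if $S=\mathbb F[t_1,\ldots,t_r]$ is graded by $\deg(t_j)=a_j$, then $H(S,n)=p_{\mathbf a}(n)$ and $H_S(t)=\prod_{j=1}^r(1-t^{a_j})^{-1}$, which is proved by expanding each geometric series and counting monomials of degree $n$. Transporting this across the graded isomorphism $\Phi$ with the weight vector $\mathbf a=(1,m_2+1,\ldots,m_q+1,1,\ldots,1)$ gives the two displayed formulas at once. The main obstacle is essentially conceptual rather than technical: one must recognize that the entire content of the remark follows from the explicit basis of $\mathbb F[H(s_0)]$ established in Theorem $1.4$, combined with the fact that the generators $x_1^{m_j}x_j$ are homogeneous in the total degree of $\mathbb F[x_1,\ldots,x_r]$; once this observation is made, each of (i)–(iii) reduces to a routine manipulation.
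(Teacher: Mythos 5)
Your proposal is correct and fleshes out exactly the reasoning that the paper leaves implicit in Remark~1.5: pull the presentation from Theorem~1.4, use $I_{H(s_0)}=(0)$ (Proposition~1.3(2)) for injectivity, read off the grading from total degree of the generators, and transport the standard Hilbert function/series formulas across the graded isomorphism. One small imprecision in (ii): taking the maximum of the bounds coming from (\ref{cur3}) and (\ref{labX}) gives $d_1 \geq d_2m_2+\cdots+d_qm_q+\max\{\lceil 2/\ell_1\rceil, m_2,\ldots,m_q\}$, which is at least as strong as, but not literally equal to, the remark's $\max\{1,\lceil 2/\ell_1\rceil\}$ term; note also that Theorem~1.4(1) forces $\ell_1=1$ in this setting, so the claimed bound is simply $d_1\geq d_2m_2+\cdots+d_qm_q+2$ and already follows from (\ref{cur3}) alone. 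This does not affect the validity of your argument.
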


For any $1\leq t\leq r-1$ we let
$$N_t:=\min\{\binom{r-1}{t-1}+\binom{r-2}{t}+1,\binom{r}{t}\} \text{ and }$$
$$L_t:=|\{\supp(v)\;:\;v\in \mathbb F[H(s_0)] \text{ monomial },\;|\supp(v)|=t\}|.$$
where $\supp(v):=\{x_j\;:\;x_j|v\}$ is the \emph{support} of the monomial $v$.

\begin{teor}
The following are equivalent:
\begin{enumerate}
 \item[(1)] Artin Conjecture is true at $s_0$: $\mathbb F[H(s_0)]=\mathbb F[x_1,\ldots,x_r]$.
 \item[(2)] $I_{H(s_0)}=(0)$ and for any $1\leq t\leq r-1$ we have that $L_t\geq N_t$.
 \item[(3)] $I_{H(s_0)}=(0)$ and there exists $1\leq t\leq r-1$ such that $L_t\geq N_t$.
\end{enumerate}
\end{teor}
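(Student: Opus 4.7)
The plan is to establish the cycle $(1)\Rightarrow(2)\Rightarrow(3)\Rightarrow(1)$, with $(2)\Rightarrow(3)$ immediate. For $(1)\Rightarrow(2)$, if $\mathbb F[H(s_0)]=\mathbb F[x_1,\ldots,x_r]$ then the ring is freely generated in $r$ variables, so $I_{H(s_0)}=(0)$, and every $S\subseteq\{x_1,\ldots,x_r\}$ is realized as $\supp\bigl(\prod_{x_j\in S}x_j\bigr)$; hence $L_t=\binom{r}{t}\geq N_t$ directly from the definition of $N_t$ as a minimum.

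The work lies in $(3)\Rightarrow(1)$, which I would prove contrapositively: assuming $I_{H(s_0)}=(0)$ and that Artin's conjecture fails at $s_0$, I would show $L_t<N_t$ for every $1\leq t\leq r-1$. Theorem $1.4$ and Remark $1.5$ furnish integers $2\leq q\leq r$ and positive $m_2,\ldots,m_q$ with
$$\mathbb F[H(s_0)] = \mathbb F[x_1,\, x_1^{m_2}x_2,\,\ldots,\,x_1^{m_q}x_q,\, x_{q+1},\,\ldots,\,x_r].$$
A monomial $u=x_1^{c_1}\cdots x_r^{c_r}$ belongs to $\mathbb F[H(s_0)]$ if and only if $c_1\geq m_2c_2+\cdots+m_qc_q$; since every $m_j$ is positive, the support $S=\supp(u)$ must satisfy the implication that $S\cap\{x_2,\ldots,x_q\}\neq\emptyset$ forces $x_1\in S$, and conversely any such $S$ is realized by a suitable monomial (take exponent $1$ on each element of $S\setminus\{x_1\}$ and choose $c_1$ large enough, or $c_1=0$ when $x_1\notin S$). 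Splitting size-$t$ realizable supports according to whether $x_1\in S$ (then choose the remaining $t-1$ elements freely in $\{x_2,\ldots,x_r\}$) or not (then $S\subseteq\{x_{q+1},\ldots,x_r\}$) yields
$$L_t = \binom{r-1}{t-1} + \binom{r-q}{t}.$$

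Since $q\geq 2$ we have $\binom{r-q}{t}\leq\binom{r-2}{t}$, while the Pascal identity
$$\binom{r}{t}-\binom{r-1}{t-1}-\binom{r-2}{t} = \binom{r-2}{t-1} \geq 1 \qquad (1\leq t\leq r-1)$$
gives $\binom{r-1}{t-1}+\binom{r-2}{t}+1\leq\binom{r}{t}$, so $N_t=\binom{r-1}{t-1}+\binom{r-2}{t}+1$ throughout the relevant range; hence $L_t\leq N_t-1<N_t$, contradicting $(3)$. The main obstacle is pinning down the combinatorial shape of the realizable supports from the presentation of Theorem $1.4$: once the implication ``$x_1$ is forced whenever $S$ meets $\{x_2,\ldots,x_q\}$'' is extracted, the count of $L_t$ and its comparison with $N_t$ reduce to elementary binomial manipulations.
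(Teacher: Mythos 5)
Your proof is correct and follows essentially the same route as the paper's: deduce from Theorem 1.4 the presentation $\mathbb F[H(s_0)]=\mathbb F[x_1,x_1^{m_2}x_2,\ldots,x_1^{m_q}x_q,x_{q+1},\ldots,x_r]$, split realizable supports by whether they contain $x_1$, obtain $L_t=\binom{r-1}{t-1}+\binom{r-q}{t}\leq\binom{r-1}{t-1}+\binom{r-2}{t}<N_t$, and contradict $(3)$. You are a bit more explicit than the paper on two points — verifying that every admissible support is actually realized (so $L_t$ equals, not merely is bounded by, the binomial sum) and noting via Pascal's identity $\binom{r}{t}-\binom{r-1}{t-1}-\binom{r-2}{t}=\binom{r-2}{t-1}\geq 1$ that the $\min$ in the definition of $N_t$ is always attained by the first term for $1\leq t\leq r-1$ (which is what really justifies the strict inequality $\binom{r-1}{t-1}+\binom{r-2}{t}<N_t$, a step the paper leaves implicit) — but the underlying argument is the same.
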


\begin{proof}
$(1)\Rightarrow (2)$ Since $\mathbb F[H(s_0)]=\mathbb F[x_1,\ldots,x_r]$, it follows that $L_t=\binom{r}{t}\geq N_t$
                     for any $1\leq t\leq r-1$. $(2)\Rightarrow (3)$ It is obvious.

$(3)\Rightarrow (1)$ We assume that $\mathbb F[H(s_0)]\neq \mathbb F[x_1,\ldots,x_r]$. According to Theorem $1.4$, 
                     there exists $2\leq q\leq r$ and $m_2,\ldots,m_q>0$ such that
										$$\mathbb F[H(s_0)]=\mathbb F[x_1,x_1^{m_2}x_2,\ldots,x_1^{m_q}x_q,x_{q+1},\ldots,x_r].$$
										Let $1\leq t\leq r-1$ such that $L_t\geq N_t$. If $v\in \mathbb F[H(s_0)]$ is a monomial with 
										$|\supp(v)|=t$, then either $x_1\in\supp(v)$, either $x_1\notin\supp(v)$ and $\supp(v)\subset\{x_{q+1},\ldots,x_r\}$.
										It follows that 
									  $$L_t = \binom{r-1}{t-1} + \binom{r-q}{t} \leq \binom{r-1}{t-1} + \binom{r-2}{t} < N_t,$$
										a contradiction.
\end{proof}

The following Corollary is a restatement of \cite[Theorem 2]{numb} and \cite[Theorem 1]{numb2}:

\begin{cor}
 The following assertions are equivalent:
\begin{enumerate}
 \item[(1)] Artin Conjecture is true at $s_0$: $\mathbb F[H(s_0)]=\mathbb F[x_1,\ldots,x_r]$.
 \item[(2)] $I_{H(s_0)}=(0)$ and $\prod_{j\neq i} x_j \in \mathbb F[H(s_0)]$, $(\forall)1\leq i\leq r$.
 \item[(3)] $I_{H(s_0)}=(0)$ and for every $j,k \in \{1,\ldots,r\}$, $j \neq k$ there exists a monomial $v\in \mathbb F[H(s_0)]$ such that $x_j|v$ and $x_{k}\nmid v$.
 \item[(4)] $I_{H(s_0)}=(0)$ and there exists $1\leq t \leq r-1$ such that for every set $M\subset\{1,\ldots,r\}$ with $t$ elements and for every $j\in M$ there exists $k_j>0$ such that $\prod_{j\in M} x_j^{k_j} \in \mathbb F[H(s_0)]$.
\end{enumerate}
\end{cor}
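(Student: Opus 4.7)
The plan is to use Theorem $1.6$ as the main engine: for each of (2), (3) and (4) I will show that, together with $I_{H(s_0)} = (0)$, the condition forces $L_t \geq N_t$ for some $1 \leq t \leq r-1$, so Theorem $1.6$ yields Artin's conjecture at $s_0$. The implications $(1) \Rightarrow (2), (3), (4)$ are immediate, since $\mathbb F[H(s_0)] = \mathbb F[x_1,\ldots,x_r]$ contains every monomial and has vanishing toric ideal (in (4) one simply takes $k_j = 1$ for all $j \in M$).

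For $(2) \Rightarrow (1)$ I would take $t = r-1$: then $N_{r-1} = \min\{(r-1) + 0 + 1,\, r\} = r$, and the $r$ monomials $\prod_{j \neq i} x_j$, $1 \leq i \leq r$, have pairwise distinct supports of size $r-1$, so $L_{r-1} \geq r = N_{r-1}$. For $(4) \Rightarrow (1)$ the condition says outright that every $t$-subset of $\{x_1,\ldots,x_r\}$ is the support of some monomial in $\mathbb F[H(s_0)]$, i.e. $L_t = \binom{r}{t} \geq N_t$ by the very definition of $N_t$, and Theorem $1.6$ closes the argument.

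For $(3) \Rightarrow (1)$ a small combinatorial step is needed. For each $k \in \{1,\ldots,r\}$ and each $j \neq k$, fix a witness monomial $v_{j,k} \in \mathbb F[H(s_0)]$ with $x_j \mid v_{j,k}$ and $x_k \nmid v_{j,k}$. The product $w_k := \prod_{j \neq k} v_{j,k}$ still lies in $\mathbb F[H(s_0)]$ (which is closed under multiplication), is divisible by every $x_j$ with $j \neq k$, and is not divisible by $x_k$, so $\supp(w_k) = \{x_1,\ldots,x_r\} \setminus \{x_k\}$. Varying $k$ produces $r$ distinct supports of size $r-1$ realized by monomials of $\mathbb F[H(s_0)]$, giving $L_{r-1} \geq r = N_{r-1}$, and one concludes via Theorem $1.6$.

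The only nontrivial step is the multiplicative trick used in $(3) \Rightarrow (1)$, namely observing that the pairwise witnesses $v_{j,k}$ can be multiplied together to realize the prescribed support of size $r-1$ in a single monomial; everything else is routine bookkeeping with the quantities $L_t$ and $N_t$ and the arithmetic of the binomial coefficients appearing in $N_{r-1}$.
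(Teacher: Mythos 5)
Your proof is correct, and the implications $(1)\Rightarrow(2),(3),(4)$ as well as $(2)\Rightarrow(1)$ and $(4)\Rightarrow(1)$ coincide with the paper's argument: compute $N_{r-1}=r$, note $L_{r-1}\geq r$ (respectively $L_t=\binom{r}{t}$), and invoke Theorem~1.6. Where you genuinely diverge is $(3)\Rightarrow(1)$. The paper does \emph{not} route this step through Theorem~1.6 at all; instead it argues contrapositively via Theorem~1.4: if Artin's conjecture fails at $s_0$, then $\mathbb F[H(s_0)] = \mathbb F[x_1,x_1^{m_2}x_2,\ldots,x_1^{m_q}x_q,x_{q+1},\ldots,x_r]$, so every monomial of $\mathbb F[H(s_0)]$ divisible by $x_2$ is automatically divisible by $x_1$, and no monomial $v$ with $x_2\mid v$, $x_1\nmid v$ can exist — directly contradicting (3). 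Your argument instead multiplies the pairwise witnesses $v_{j,k}$ into a single monomial $w_k$ of support $\{x_1,\ldots,x_r\}\setminus\{x_k\}$ (using that the monomials of $\mathbb F[H(s_0)]$ are closed under products), concluding $L_{r-1}\geq r=N_{r-1}$ and again applying Theorem~1.6. Both are valid; the paper's route is shorter and more structural, while yours is nicely uniform in that all three implications $(2),(3),(4)\Rightarrow(1)$ are reduced to a single support-count criterion, and the multiplicative-witness observation makes the logical dependence on Theorem~1.6 transparent.
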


\begin{proof}
$(1)\Rightarrow (2),(3),(4)$ are obvious. 

$(2)\Rightarrow (1)$ The hypothesis $(2)$ implies $L_{r-1}=\binom{r}{r-1}=r=N_{r-1}$, hence the conclusion follows from Theorem $1.6$.

$(4)\Rightarrow (1)$ The hypothesis $(4)$ implies that there exists $1\leq t\leq r-1$ such that $L_t=\binom{r}{t}\geq N_t$, hence the conclusion follows from Theorem $1.6$.

$(3)\Rightarrow (1)$ Assume $\mathbb F[H(s_0)]\neq \mathbb F[x_1,\ldots,x_r]$ and let $v\in \mathbb F[H(s_0)]$ be a monomial. 
From Theorem $1.4$, it follows that $x_2|u$ implies $x_1|u$, hence we contradict $(4)$.
\end{proof}

\begin{cor}
 If $f_1,\ldots,f_r$ have simple zeros distinct, then, for any $s_0\in \mathbb C\setminus\{1\}$, the following are equivalent.
\begin{enumerate}
 \item[(1)] Artin's Conjecture is true at $s_0$: $\mathbb F[H(s_0)]=\mathbb F[x_1,\ldots,x_r]$.
 \item[(2)] There exists $1\leq t\leq r-1$ such that $L_t\geq N_t$.
 \item[(3)] $\prod_{j\neq i} x_j \in \mathbb F[H(s_0)]$, $(\forall)1\leq i\leq r$.
 \item[(4)] For every $j,k \in \{1,\ldots,r\}$, $j \neq k$ there exists a monomial $u\in \mathbb F[H(s_0)]$ such that $x_j|u$ and $x_{k}\nmid u$.
 \item[(5)] There exists $1\leq m < r$ such that for every set $M\subset\{1,\ldots,r\}$ with $m$ elements and for every $j\in M$ there exists $k_j>0$ such that $\prod_{j\in M} x_j^{k_j} \in \mathbb F[H(s_0)]$.
\end{enumerate}
\end{cor}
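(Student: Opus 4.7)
The plan is to reduce Corollary $1.8$ to Corollary $1.7$ and Theorem $1.6$ by showing that the hypothesis ``$f_1,\ldots,f_r$ have simple zeros distinct'' makes the auxiliary condition $I_{H(s_0)}=(0)$ automatic for every $s_0\in\mathbb C\setminus\{1\}$. After this reduction the five conditions of Corollary $1.8$ match the conditions of Theorem $1.6$ and Corollary $1.7$ stripped of the redundant toric-ideal hypothesis.

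First I would verify that $I_{H(s_0)}=(0)$ always holds under the standing hypothesis. If Artin's conjecture is true at $s_0$, this is trivial. Otherwise, by the decomposition (\ref{lab5}), after relabeling there exist $1\leq p\leq q\leq r$ with $\ell_1,\ldots,\ell_p>0$. The hypothesis forces two things at once: each positive $\ell_j$ must equal $1$ (simplicity), and at any single $s_0$ at most one of the $f_j$'s can vanish (distinctness), so $p\leq 1$. If $p=0$ then every $\ell_j\leq 0$, but from (\ref{cur1}) the regular character gives $\sum_j d_j\ell_j\geq 0$ with $d_j>0$, forcing every $\ell_j=0$ and contradicting the failure of the conjecture. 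Hence $p=1$ and $\ell_1=1$. Since $1\mid \ell_j$ holds trivially for every $j$, Theorem $1.4$ yields $I_{H(s_0)}=(0)$.

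With $I_{H(s_0)}=(0)$ in hand, I would close the equivalences as follows. The implications $(1)\Rightarrow(2),(3),(4),(5)$ are immediate, since $\mathbb F[H(s_0)]=\mathbb F[x_1,\ldots,x_r]$ contains every monomial. For $(2)\Rightarrow(1)$ I would invoke Theorem $1.6$, which deduces Artin's conjecture from the combination of $I_{H(s_0)}=(0)$ and the existence of some $1\leq t\leq r-1$ with $L_t\geq N_t$. The implications $(3)\Rightarrow(1)$, $(4)\Rightarrow(1)$, and $(5)\Rightarrow(1)$ follow directly from the corresponding implications $(2)\Rightarrow(1)$, $(3)\Rightarrow(1)$, $(4)\Rightarrow(1)$ of Corollary $1.7$, since the toric-ideal condition appearing there is now automatic.

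I expect the only substantive step to be the first one. The crux is recognizing that ``simple zeros distinct'' simultaneously rules out $p\geq 2$ (by distinctness) and the divisibility obstruction $\ell_1\nmid \ell_j$ (because $\ell_1=1$), which are precisely the two ingredients required by Theorem $1.4$(1). Once that observation is made, the rest of the argument is routine bookkeeping that lifts the $I_{H(s_0)}=(0)$ hypothesis off Corollary $1.7$ and Theorem $1.6$.
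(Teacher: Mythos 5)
Your proposal is correct and follows the same route as the paper, which simply cites Theorem 1.4, Theorem 1.6 and Corollary 1.7. You correctly identify the one nontrivial observation — that the simplicity and distinctness hypotheses force $p=1$, $\ell_1=1$ in the decomposition (\ref{lab5}), so Theorem 1.4(1) holds automatically whenever Artin's conjecture fails and hence $I_{H(s_0)}=(0)$ in all cases — after which the equivalences drop out of Theorem 1.6 and Corollary 1.7 exactly as you describe.
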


\begin{proof}
It follows from Theorem $1.4$, Theorem $1.6$ and Corollary $1.7$.
\end{proof}

A finite group $G$ is called \emph{almost monomial} if for every distinct irreducible characters $\chi$ and $\psi$
of $G$ there exist a subgroup $H$ of $G$ and a linear character $\varphi$ of $H$ such
that the induced character $\varphi^G$ contains $\chi$ and does not contain $\psi$. Every
monomial group and every quasi monomial group in the sense of \cite{numb0} are almost monomial.
We restate \cite[Theorem 1]{monat}:

\begin{teor}
If the group $G$ is almost monomial and $I_{H(s_0)}=(0)$ then the Artin conjecture holds at $s_0$.
\end{teor}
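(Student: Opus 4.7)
The plan is to argue by contradiction: suppose $G$ is almost monomial and $I_{H(s_0)}=(0)$, but that Artin's conjecture fails at $s_0$. Since $I_{H(s_0)}=(0)$, Theorem $1.4$ applies, so after reindexing the irreducible characters we may assume $\ell_1>0$ and $\ell_j\leq 0$ for all $j\geq 2$, with at least one $\ell_j$ strictly negative (otherwise no pole would occur and the conjecture would hold). Fix $k\geq 2$ with $\ell_k<0$, so that $f_k$ has a pole at $s_0$ while $f_1$ is the unique L-function with a zero there.

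Next I invoke the almost monomial hypothesis on the distinct irreducible pair $\chi_k,\chi_1$: there exist a subgroup $H\leq G$ and a linear character $\varphi$ of $H$ such that the induced character decomposes as $\varphi^G=\sum_{j=1}^r a_j\chi_j$ with $a_j\in\mathbb N$, $a_k\geq 1$, and $a_1=0$. By the invariance of Artin L-functions under induction, $L(s,\varphi^G,K/\mathbb Q)=L(s,\varphi,K/K^H)=\prod_{j=1}^r f_j^{a_j}$; and since $\varphi$ is one-dimensional, class field theory identifies $L(s,\varphi,K/K^H)$ with a Hecke L-function, which is holomorphic on $\mathbb C\setminus\{1\}$ and in particular at $s_0$.

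Translating this holomorphy into orders at $s_0$ gives $\ord(\prod_j f_j^{a_j}) = \sum_{j=1}^r a_j\ell_j\geq 0$. However $a_1\ell_1=0$, while $a_k\ell_k<0$, and every remaining term $a_j\ell_j$ is nonpositive (as $a_j\geq 0$ and $\ell_j\leq 0$ for $j\geq 2$). Hence $\sum_j a_j\ell_j<0$, the desired contradiction. The only nontrivial external ingredient is the holomorphy of the one-dimensional Artin L-function $L(s,\varphi,K/K^H)$, which is where class field theory enters; without this input the combinatorial almost monomial condition alone would not suffice. The role of Theorem $1.4$ is precisely to reduce the a priori complicated sign pattern of the $\ell_j$'s to the rigid case ``only $\ell_1$ is positive,'' against which the induction-holomorphy test can be applied in a single stroke.
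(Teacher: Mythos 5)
The paper does not actually prove this theorem; it explicitly restates it as \cite[Theorem 1]{monat} and relies on the external reference. Your reconstruction is correct and is, in substance, the argument Nicolae gives in \cite{monat}: after invoking Theorem $1.4$ to reduce to the case where $f_1$ is the unique function vanishing at $s_0$ and all other $\ell_j\leq 0$, you apply the almost monomial property to the pair $(\chi_k,\chi_1)$ with $\ell_k<0$, induce a linear character $\varphi$ from a subgroup $H$, use $L(s,\varphi^G,K/\mathbb Q)=L(s,\varphi,K/K^H)$ together with the class-field-theoretic identification of $L(s,\varphi,K/K^H)$ with a Hecke L-function holomorphic on $\mathbb C\setminus\{1\}$, and read off $\sum_j a_j\ell_j\geq 0$ while the term-by-term estimate (using $a_1=0$, $a_k\geq 1$, and $\ell_j\leq 0$ for $j\geq 2$) forces $\sum_j a_j\ell_j<0$, a contradiction. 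One small remark on reliance on Theorem $1.4$: although its statement in the paper reads ``simple zero,'' the Introduction (and the proof of $(2)\Rightarrow(1)$, which separately handles the divisibility condition) show the statement should read ``zero''; either way, the only consequence you actually use is that exactly one $\ell_j$ is positive, and that is correctly extracted. So the proof is sound and coincides with the cited source's approach, rather than offering a new route.
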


\begin{cor}
If the group $G$ is almost monomial and $f_1,\ldots,f_r$ have simple zeros distinct, then the Artin conjecture holds
for any $s_0\in \mathbb C\setminus\{1\}$.
\end{cor}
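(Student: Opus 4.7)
The plan is to argue by contradiction and reduce everything to Theorem $1.9$. Suppose that for some $s_0\in\mathbb C\setminus\{1\}$ the Artin conjecture fails; I want to derive a contradiction from the two hypotheses (almost monomial, and simple distinct zeros).

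First I would determine the shape of the order vector $(\ell_1,\ldots,\ell_r)$ at $s_0$. The hypothesis that $f_1,\ldots,f_r$ have simple zeros, pairwise distinct, means that at any point of $\mathbb C$ at most one $f_j$ vanishes, and if so it vanishes to order exactly $1$. At $s_0$ this forces at most one $\ell_j$ to be positive, and in that case the positive value is $1$. On the other hand, the containment $x_1^{d_1}\cdots x_r^{d_r}\in\mathbb F[H(s_0)]$ from (\ref{cur1}) combined with failure of the Artin conjecture (which, via (\ref{lab5}), requires some $\ell_j<0$) forces at least one $\ell_j>0$. After reordering I can therefore assume $\ell_1=1$, $\ell_j\leq 0$ for $j\geq 2$, and at least one $\ell_j<0$.

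Next I would verify that condition (1) of Theorem $1.4$ is met: $s_0$ is a simple zero of $f_1$, it is not a zero of $f_2,\ldots,f_r$, and the divisibility $\ell_1\mid\ell_j$ holds trivially since $\ell_1=1$. Applying Theorem $1.4$ yields $I_{H(s_0)}=(0)$. Now invoking Theorem $1.9$, the almost monomial assumption together with $I_{H(s_0)}=(0)$ forces the Artin conjecture to hold at $s_0$, contradicting the starting assumption.

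The argument is essentially an assembly of the prior results, so there is no serious obstacle; the one subtle point is the initial reduction, where the hypothesis of simple distinct zeros has to be translated into the precise arithmetic statement $\ell_1=1$ and $\ell_j\leq 0$ for $j\geq 2$, which in turn matches Theorem $1.4(1)$ exactly. Once this matching is in place, Theorem $1.9$ closes the argument immediately.
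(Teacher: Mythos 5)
Your proposal is correct and takes the same route as the paper, which simply cites Theorem~1.4 and Theorem~1.9; you have just spelled out the implicit reduction (the simple-distinct-zeros hypothesis forces $\ell_1=1$, $\ell_j\le 0$ for $j\ge 2$, so Theorem~1.4(1) holds trivially and $I_{H(s_0)}=(0)$), after which Theorem~1.9 closes the argument.
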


\begin{proof}
It follows from Theorem $1.4$ and Theorem $1.9$.
\end{proof}

In the following, we discuss other cases, in which $I_{H(s_0)}\neq 0$.

\begin{prop}
Assume that $f_{p+1},\ldots,f_q$ have simple poles at $s_0$. Then:
$$\mathbb F[H(s_0)] = \mathbb F[x_{q+1},\ldots,x_r, x_j v\;: 1\leq j\leq p,\;\supp(v)\subset \{x_{p+1},\ldots,x_q\}, \deg(v)\leq \ell_j],$$
where $\supp(v):=\{x_j\;:\;x_j|v\}$ is the \emph{support} of the monomial $v$.
\end{prop}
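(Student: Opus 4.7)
The approach is a direct double inclusion. For the containment $\supseteq$, each listed generator lies in $\mathbb F[H(s_0)]$ by an order computation: since $\ell_{p+1}=\cdots=\ell_q=-1$ under the simple-pole hypothesis, a generator $x_j v$ with $1\le j\le p$, $\supp(v)\subset\{x_{p+1},\ldots,x_q\}$ and $\deg(v)\le \ell_j$ satisfies $\ord(x_j v)=\ell_j-\deg(v)\ge 0$, while $x_{q+1},\ldots,x_r$ have order $0$ by the normalization in \eqref{lab5}. This direction is immediate.

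For the reverse inclusion, take a monomial $u = x_1^{a_1}\cdots x_r^{a_r}\in \mathbb F[H(s_0)]$; by \eqref{lab6} and simplicity of the poles this amounts to $\sum_{j=1}^p a_j\ell_j \ge \sum_{i=p+1}^q a_i$. The factor $x_{q+1}^{a_{q+1}}\cdots x_r^{a_r}$ is already a product of listed generators, so I reduce to the case $a_{q+1}=\cdots=a_r=0$. If in addition $a_1=\cdots=a_p=0$, the inequality forces all pole exponents to vanish and $u=1$, trivially in the ring.

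Otherwise, I decompose $u$ by a greedy packing argument. Regard each of the $a_1+\cdots+a_p$ individual linear factors of $x_1^{a_1}\cdots x_p^{a_p}$ as a bin, where a copy of $x_j$ has capacity $\ell_j$, and view the pole variables constituting $x_{p+1}^{a_{p+1}}\cdots x_q^{a_q}$ as unit items to be distributed among these bins. The total capacity $\sum_{j=1}^p a_j\ell_j$ is bounded below by the total load $\sum_{i=p+1}^q a_i$ precisely by the holomorphy condition, so greedy filling --- list bins and items in any order and fill each bin to capacity before moving on --- succeeds. The resulting factorization $u = \prod_k (x_{j_k} v_k)$, in which each $v_k$ is a monomial with $\supp(v_k)\subset\{x_{p+1},\ldots,x_q\}$ and $\deg(v_k) \le \ell_{j_k}$, exhibits $u$ as a product of listed generators. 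The substantive ingredient is really the reduction of the order condition to a discrete capacity inequality, made possible by $|\ell_i|=1$ for $p+1\le i\le q$; everything else is bookkeeping, and the only case-split needed is the degenerate one handled above.
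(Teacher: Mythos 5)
Your proof is correct and takes essentially the same route as the paper's: the paper's induction on $\deg(u)$ peels off one generator $x_j v$ at a time with $\deg(v)=\min\{a_{p+1}+\cdots+a_q,\ell_j\}$, which is precisely one step of your greedy bin-filling, so the two arguments are the same decomposition presented inductively versus all at once.
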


\begin{proof}
Let $R:=\mathbb F[x_{q+1},\ldots,x_r, x_j v\;: 1\leq j\leq p,\;\supp(v)\subset \{x_{p+1},\ldots,x_q\}, \deg(v)\leq \ell_j]$.
Since $\ell_{p+1}=\cdots=\ell_q=-1$, it follows that $\ord(x_j v) = \ell_j - \deg(v)$ for any $1\leq j\leq p$. 
Hence $R\subseteq \mathbb F[H(s_0)]$. In order to prove the converse inclusion, 
let $u=x_1^{a_1}x_2^{a_2}\cdots x_r^{a_r}\in \mathbb F[H(s_0)]$. 
It follows that
$$\ord(u) = a_1 \ell_1 + \cdots a_p \ell_p  -  a_{p+1} - \cdots - a_q \geq 0.$$
We prove that $u\in R$, using induction on $\deg(u)\geq 0$.
If $\deg(u)=0$, i.e. $u=1$, then there is nothing to prove. Assume $\deg(u)>0$. If $a_{p+1}=\cdots=a_q=0$ then $u\in R$.
If this is not the case, then there exists $1\leq j\leq p$ such that $a_j>0$. We choose a monomial $v$ such that
$$\deg(v) = \min\{ a_{p+1}+\cdots+a_q, \ell_j\},\;\supp(v)\subset \{x_{p+1},\ldots,x_q\},\;\deg_{x_j}(v)\leq a_j,\;p+1\leq j\leq q,$$
where $\deg_{x_j}(v)=\max\{k:\;x_j^k|v\}$. We write $u=u'v$. Since $\ord(u')\geq 0$, by induction hypothesis, it follows that $u'\in R$,
hence $u=u'(x_jv) \in R$.
\end{proof}

\begin{prop}
Assume that $f_{1},\ldots,f_p$ have simple zeros at $s_0$. Then: \small{
$$\mathbb F[H(s_0)] = \mathbb F[x_1,\ldots,x_p, x_{q+1},\ldots,x_r, v x_j\;: p+1\leq j\leq q,\;\supp(v)\subset \{x_{1},\ldots,x_p\}, \deg(v) = - \ell_j].$$}
\end{prop}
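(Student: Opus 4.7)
The plan is to mirror the proof of Proposition 1.11, since the present statement is the dual in which the roles of the simple poles and the zeros are exchanged. Let $R$ denote the algebra on the right-hand side. The inclusion $R \subseteq \mathbb F[H(s_0)]$ is the easy direction: by hypothesis $\ell_1=\cdots=\ell_p=1$, so $\ord(x_i)=1$ for $1\leq i\leq p$; the generators $x_{q+1},\ldots,x_r$ have order $0$; and for each $p+1\leq j\leq q$ and each admissible $v$ one computes
$$\ord(v x_j) = \deg(v)\cdot 1 + \ell_j = -\ell_j + \ell_j = 0,$$
so every listed generator lies in $\mathbb F[H(s_0)]$.

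For the reverse inclusion, I take a monomial $u = x_1^{a_1}\cdots x_r^{a_r} \in \mathbb F[H(s_0)]$ and proceed by induction on the quantity $a_{p+1}+\cdots+a_q$, the total exponent of the pole-variables. The base case $a_{p+1}=\cdots=a_q=0$ is immediate: $u$ is then a monomial in $x_1,\ldots,x_p, x_{q+1},\ldots,x_r$, which are among the generators of $R$. In the inductive step, pick some $j$ with $p+1\leq j\leq q$ and $a_j\geq 1$. The order condition
$$a_1+\cdots+a_p \;\geq\; a_{p+1}|\ell_{p+1}|+\cdots+a_q|\ell_q| \;\geq\; |\ell_j|$$
allows me to select a monomial $v$ with $\supp(v)\subset\{x_1,\ldots,x_p\}$, $\deg(v)=|\ell_j|=-\ell_j$, and crucially $\deg_{x_i}(v)\leq a_i$ for every $1\leq i\leq p$ (e.g.\ by a greedy choice: take as many $x_1$'s as possible up to $a_1$, then $x_2$'s up to $a_2$, and so on). Set $u' := u/(vx_j)$; then $u'$ is still a monomial, $\ord(u')=\ord(u)-\ord(vx_j)=\ord(u)\geq 0$, and the new pole-exponent sum has dropped by $1$. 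By the induction hypothesis $u'\in R$, hence $u = u'\cdot(vx_j)\in R$.

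The only step requiring a small verification is the ability to choose $v$ as a submonomial of $x_1^{a_1}\cdots x_p^{a_p}$, and this is precisely where the \emph{simplicity} of the zeros of $f_1,\ldots,f_p$ is used: because each $\ord(x_i)=1$, the only constraint on $v$ is its total degree $|\ell_j|$, which the order inequality already guarantees is at most $a_1+\cdots+a_p$. I do not expect any serious obstacle beyond this bookkeeping; the argument is essentially the same as in Proposition 1.11, with the symmetric roles of zero-variables and pole-variables swapped.
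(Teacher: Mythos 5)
Your proposal is correct and follows essentially the same strategy as the paper's own proof: verify $R\subseteq\mathbb F[H(s_0)]$ by the order computation $\ord(vx_j)=\deg(v)+\ell_j=0$, and prove $\supseteq$ by peeling off a generator $vx_j$ and inducting. The only inessential difference is the induction variable — you use the unweighted pole-exponent sum $a_{p+1}+\cdots+a_q$, whereas the paper uses the weighted sum $s=-\ell_{p+1}a_{p+1}-\cdots-\ell_q a_q$; both decrease strictly at each step and both give the same argument. (In fact your write-up cleanly separates base case from inductive step, which the paper's version blurs.)
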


\begin{proof}
We denote $$R:=\mathbb F[x_1,\ldots,x_p, x_{q+1},\ldots,x_r, v x_j\;: p+1\leq j\leq q,\;\supp(v)\subset \{x_{1},\ldots,x_p\}, 
\deg(v) = - \ell_j].$$ 
Since $\ell_{1}=\cdots=\ell_p=1$, it follows that $\ord(x_j v) = \ell_j + \deg(v) = 0$ for any $p+1\leq j\leq q$
and any monomial $v\in \mathbb F[x_1,\ldots,x_p]$ with $\deg(v)=-\ell_j$. Hence $R\subseteq \mathbb F[H(s_0)]$. 
In order to prove $\supseteq$, let $u=x_1^{a_1}x_2^{a_2}\cdots x_r^{a_r}\in \mathbb F[H(s_0)]$. It follows that
$$ \ord(u) = a_1+a_2+\cdots+a_p+\ell_{p+1}a_{p+1}+\cdots \ell_q a_q \geq 0. $$
Let $s:=- \ell_{p+1}a_{p+1} - \cdots - \ell_q a_q$. We prove that $u\in R$, using induction on $s\geq 0$.
If $s=0$ then we choose a monomial $v\in \mathbb F[x_1,\ldots,x_p]$ such that $\deg(v) = - \ell_j$ and
$\deg_{x_j}(v)\leq a_j,\;1\leq j\leq p$. We write $u=u'\cdot(x_jv)$. Applying the induction hypothesis on $u'$,
it follows that $u'\in R$ and thus $u\in R$, as required.
\end{proof}

\begin{teor}
Assume that $f_1,\ldots,f_r$ have at most simple zeros and simple poles at $s_0$. If Artin conjecture does not hold at $s_0$, then:
\begin{enumerate}
\item[(1)] $\mathbb F[H(s_0)] = \mathbb F[x_1,\ldots,x_{p},x_{q+1},\ldots,x_r, x_{j}x_k \;:\; 1\leq j\leq p,\;p+1 \leq k\leq q]$.
\item[(2)] Letting $\Phi:\mathbb F[t_1,\ldots,t_p,t_{q+1},\ldots,t_r, t_{jk}\;:\; 1\leq j\leq p,\;p+1 \leq k\leq q] \rightarrow \mathbb F[H(s_0)]$,
           $\Phi(t_j)=x_j$, $\Phi(t_{jk})=x_jx_k$, we have:
$$I_{H(s_0)}=\Ker(\Phi)=(t_jt_{ik}-t_it_{jk},\; t_{jk}t_{im} - t_{jm}t_{ik} \;:\; 1\leq j,i \leq p,\;p+1\leq k,m\leq q).$$
\end{enumerate}
\end{teor}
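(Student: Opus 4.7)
The plan is to specialize Proposition $1.11$ to the present setting. Under the simple zero/pole hypothesis, $\ell_j = 1$ for $1\leq j\leq p$, $\ell_k = -1$ for $p+1\leq k\leq q$, and $\ell_l = 0$ for $q+1\leq l\leq r$. Proposition $1.11$ lists as generators the variables $x_{q+1},\ldots,x_r$ together with all products $x_j v$ for $1\leq j\leq p$ with $\supp(v)\subset\{x_{p+1},\ldots,x_q\}$ and $\deg(v)\leq \ell_j = 1$; the only such $v$ are $1$ and the single variables $x_k$ with $p+1\leq k\leq q$, yielding exactly the generating set claimed.

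\textbf{Part (2).} Let $J$ denote the ideal on the right-hand side. The inclusion $J\subseteq I_{H(s_0)}$ is immediate from the verifications $\Phi(t_jt_{ik}) = x_ix_jx_k = \Phi(t_it_{jk})$ and $\Phi(t_{jk}t_{im}) = x_ix_jx_kx_m = \Phi(t_{jm}t_{ik})$. For the reverse inclusion, the plan is to invoke the standard fact that a toric ideal is generated by the pure binomials $t^\alpha - t^\beta$ with $\Phi(t^\alpha)=\Phi(t^\beta)$, and to reduce every such binomial modulo $J$ by induction on the total degree $|\alpha|$. After factoring out common variables, I may assume $\gcd(t^\alpha,t^\beta)=1$ and $\alpha\neq\beta$; the variables $t_{q+1},\ldots,t_r$ do not appear in any such reduced binomial, since their images are algebraically independent of the remaining generators.

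The inductive step splits into two cases. In \emph{Case 1}, some $t_j$ with $1\leq j\leq p$ divides $t^\alpha$; then $t_j\nmid t^\beta$, and comparing $x_j$-exponents of $\Phi(t^\alpha)$ and $\Phi(t^\beta)$ forces some $t_{jk}$ to divide $t^\beta$. Comparing $x_k$-exponents then produces some $t_{ik}$ dividing $t^\alpha$ with necessarily $i\neq j$, so the generator $t_jt_{ik}-t_it_{jk}$ rewrites $t^\alpha$ modulo $J$ into a monomial sharing the factor $t_{jk}$ with $t^\beta$; factoring out $t_{jk}$ yields a lower-degree binomial in the kernel, to which induction applies. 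In \emph{Case 2}, neither $t^\alpha$ nor $t^\beta$ involves any $t_j$ with $j\leq p$, so both are pure products of $t_{jk}$'s; a parallel exponent-matching argument supplies indices $j\neq i$ in $\{1,\ldots,p\}$ and $k\neq k'$ in $\{p+1,\ldots,q\}$ with $t_{jk}t_{ik'}\mid t^\alpha$ and $t_{jk'}\mid t^\beta$, so that the generator $t_{jk}t_{ik'}-t_{jk'}t_{ik}$ supplies the common factor needed for cancellation.

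The main point requiring care is verifying in each case that the indices produced by exponent matching are genuinely distinct---$i\neq j$ in Case 1, and $i\neq j$ together with $k\neq k'$ in Case 2---so that the binomial actually used is a non-trivial generator of $J$ rather than zero. This follows from the strict inequalities between $\alpha$- and $\beta$-exponents forced by the disjointness of supports, but it is the bookkeeping step where the argument is most delicate; the rest is a routine toric-reduction pattern.
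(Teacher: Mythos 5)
Your proof is correct and follows essentially the same strategy as the paper: specialize Proposition $1.11$ for Part~(1) (the paper cites both $1.11$ and $1.12$, but under the simple zero/pole hypothesis either one alone suffices, as you observe), and for Part~(2) reduce the toric ideal to pure binomials, then use exponent-matching to locate generators of $J$ that produce common factors and lower degree. The paper organizes the reduction in (2) as two sequential phases, first eliminating all $t_j$ with $j\leq p$ via the relations $t_jt_{ik}-t_it_{jk}$ and then reducing the pure $t_{jk}$-binomials via $t_{jk}t_{im}-t_{jm}t_{ik}$; you instead run a single induction on degree with a case split, which is substantively the same but makes the termination and index-distinctness bookkeeping more explicit, precisely where the paper's prose is terse.
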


\begin{proof}
(1) It follows from Proposition $1.11$ and Proposition $1.12$.

(2) Let $I=(t_jt_{ik}-t_it_{jk},\; t_{jk}t_{im} - t_{jm}t_{ik} \;:\; 1\leq j,i \leq p,\;p+1\leq k,m\leq q)$.
    The inclusion $I\subseteq I_{H(s_0)}$ is obvious. Conversely, let $v\in I_{H(s_0)}$, $v=v^+ - v^-$ be a binomial.
		It follows that 
		\begin{equation}\label{pool}
		\Phi(v^+)=\Phi(v^-)=x_1^{a_1}\cdots x_r^{a_r}, \text{ where } a_j\geq 0,\;1\leq j\leq r. 
		\end{equation}
		Let \begin{equation}\label{poool}
		v^+ = t_1^{b_1}\cdots t_p^{b_p}t_{q+1}^{b_{q+1}}\cdots t_r^{b_r} \prod_{\substack{1\leq j \leq p \\ p+1 \leq k\leq q}}t_{jk}^{b_{jk}},
		\; v^- = t_1^{b'_1}\cdots t_p^{b'_p}t_{q+1}^{b'_{q+1}}\cdots t_r^{b_r} 
		\prod_{\substack{1\leq j \leq p \\ p+1 \leq k\leq q}}t_{jk}^{b'_{jk}}
		\end{equation}                
		From (\ref{pool}) and (\ref{poool}) it follows that \small{ 
                \begin{equation}\label{poola}
                b_j + \sum_{k=p+1}^q b_{jk} = b'_j + \sum_{k=p+1}^q b'_{jk},\; 1\leq j\leq p, \; 
		   \sum_{j=1}^p b_{jk} = \sum_{j=1}^p b'_{jk},\;p+1\leq k\leq q,\; b_j=b'_j,\; q+1\leq j\leq r. 
                \end{equation}}		
                Consequentely, we can assume that $b_j=b'_j=0$, $(\forall)q+1\leq j\leq r$.

                Let $s(v):=b_1+\cdots+b_p-b'_1-\cdots-b'_p$. Without any loss of generality, we assume $s(v)\geq 0$.
                We claim that we can assume $b_j\leq b'_j$, $(\forall)1\leq j\leq p$.
                Indeed, if let's say $b_1<b'_1$ then there exists $p+1\leq k\leq q$ such that $b_{1k} > b'_{1k}$. 
                Also, as $s(v)\geq 0$, then there exists $2\leq j\leq p$ such that $b_j>b'_j$.
                Since $t_1t_{jk}-t_jt_{1k}\in I_{H(s_0)}$, it follows that $v\in I_{H(s_0)}$ if and only if 
                $v' = \frac{t_1t_{jk}}{t_jt_{1k}}v^+ - v^{-} \in I_{H(s_0)}$. As $s(v)=s(v')$, using repeatedly the previous argument,
                we prove the claim. 

                Consequently, without any loss of generality, we can assume $b'_j=0$, $(\forall)1\leq j\leq p$.
                From (\ref{poola}) it follows that $$ b_j + \sum_{k=p+1}^q b_{jk} = \sum_{k=p+1}^q b'_{jk},\; 1\leq j\leq p, \; 
		   \sum_{j=1}^p b_{jk} = \sum_{j=1}^p b'_{jk},\;p+1\leq k\leq q, $$
                which implies $b_j=0$, $(\forall)1\leq j\leq p$. Therefore, we have 
                \begin{equation}\label{piulita}
                 \sum_{k=p+1}^q b_{jk} = \sum_{k=p+1}^q b'_{jk},\; 1\leq j\leq p, \; 
		  \sum_{j=1}^p b_{jk} = \sum_{j=1}^p b'_{jk},\;p+1\leq k\leq q,
                \end{equation}
                $$v=\prod_{j=1}^p\prod_{k=p+1}^q t_{jk}^{b_{jk}}=\prod_{j=1}^p\prod_{k=p+1}^q t_{jk}^{b'_{jk}}.$$
                We claim that we can assume $b_{jk}\geq b'_{jk}$, $(\forall)1\leq j\leq p,\;p+1\leq k\leq q$, hence, by (\ref{piulita}), $b_{jk}=b'_{jk}$, $(\forall) 1\leq j\leq p,\;p+1\leq k\leq q$ 
                which implies $v=0\in I$, as required. Indeed, if this is not the case, we have $b_{jk}<b'_{jk}$ for some $1\leq j\leq p$ and $p+1\leq k\leq q$. From (\ref{piulita}) it follows
                that there exists some indices $1\leq m\leq p$, $p+1\leq n\leq q$ such that $j\neq m$, $k\neq n$ and $b_{mn}<b'_{mn}$. Since $t_{jk}t_{mn} - t_{jn}t_{km} \in I_{H(s_0)}$, it follows that
                $v\in I_{H(s_0)}$ if and only if $v'=\frac{t_{jk}t_{mn}}{t_{jn}t_{km}}v^+ - v^- \in I_{H(s_0)}$. Using repeatedly the same argument, we prove the claim.
\end{proof}

\section{Virtual characters}
Let $\chi = a_1\chi_1 + a_2\chi_2 + \cdots + a_r\chi_r,\;a_j\in\mathbb Z,\;1\leq j\leq r$, be a virtual character of the group $G=\Gal(K/\mathbb Q)$,
and let $L(s,\chi):=f_1^{a_1}f_2^{a_2}\cdots f_r^{a_r}$. The map
\begin{equation}\label{cuuur}
\overline{Ar} \rightarrow \mathbb Z^r, \; L(s,\chi)\mapsto (a_1,\ldots,a_r),
\end{equation}
is an isomophism of groups, where $\overline{Ar}$ was defined in Introduction. Since $L(s,\chi_1),\ldots,L(s,\chi_r)$ are algebraically independent over $\mathbb F$,
it follows that 
$$ \mathbb F[\overline{Ar}] \cong \mathbb F[x_1^{\pm 1},\ldots,x_r^{\pm 1}] = \text{ the ring of Laurent polynomial in } r \text{ indeterminates }.$$
We consider the set 
$$\overline{Hol(s_0)} = \{f=L(s,\chi)\;:\;f \text{ holomorphic at }s_0,\;\chi \text{ a virtual character of }G\}.$$
One can easily check that $\overline{Hol(s_0)}$ is a subsemigroup in $\overline{Ar}$ and (\ref{cuuur}) gives an isomorphism
$$ \overline{Hol(s_0)} \cong \overline{H(s_0)}:=\{(a_1,\ldots,a_r)\in\mathbb Z^r\;:\; a_1\ell_1+\cdots+a_r\ell_r\geq 0\}. $$
By Gordon's Lemma, the semigroup $\overline{H(s_0)}$ is affine. It follows that
$$\mathbb F[\overline{Hol(s_0)}] \cong \mathbb F[\overline{H(s_0)}] \subseteq \mathbb F[x_1^{\pm 1},\ldots,x_r^{\pm 1}],$$
and $\mathbb F[\overline{H(s_0)}]$ is a toric ring.
Since $\overline{H(s_0)}\cap \mathbb N^r = H(s_0)$, it follows that
\begin{equation}\label{curva}
  \mathbb F[\overline{H(s_0)}]\cap \mathbb F[x_1,\ldots,x_r] = \mathbb F[H(s_0)].
\end{equation}
Let $u=x_1^{a_1}\cdots x_r^{a_r}\in \mathbb F[x_1^{\pm 1},\ldots,x_r^{\pm r}]$, where $a_j\in\mathbb Z$, $1\leq j\leq r$. We denote
$$\ord(u):=a_1\ell_1+\cdots+a_r\ell_r.$$
It follows that $u\in \mathbb F[\overline{H(s_0)}]$ if and only if $\ord(u)\geq 0$. 

\begin{obs}\emph{
In \cite[p. 871]{heil} Heilbronn related Artin's Conjecture with the language of character theory. He introduced the virtual character
$\Theta:= \ell_1\chi_1 + \cdots + \ell_r \chi_r$,
with the corresponding function $L(\Theta,s)=f_1^{\ell_1}\cdots f_r^{\ell_r}$, which is holomorphic at $s_0$, i.e. $L(\Theta,s)\in \overline{Hol(s_0)}$. 
Note that $\ord_{s=s_0}L(\Theta,s) = \sum_{j=1}^r|\ell_j|$. Let
 $m_{\Theta}=x_1^{\ell_1}\cdots x_r^{\ell_r}\in \mathbb F[x_1^{\pm 1},\ldots,x_r^{\pm 1}]$.
If is easy to note that Artin conjecture is true at $s_0$ if and only if $m_{\Theta} \in \mathbb F[x_1,\ldots,x_r]$.
A recent generalization of Heilbronn character can be found in \cite{wong}.}
\end{obs}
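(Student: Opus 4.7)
The Remark bundles three small claims that all follow directly from the notation established in the preceding paragraphs of Section 2, so the plan is simply to unpack definitions in the order they appear.

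First, I would verify that $L(\Theta,s)$ is holomorphic at $s_0$. Under the isomorphism \eqref{cuuur}, the function $L(\Theta,s)=f_1^{\ell_1}\cdots f_r^{\ell_r}$ corresponds to the lattice point $(\ell_1,\ldots,\ell_r)\in\mathbb Z^r$. The extended order function defined just before the Remark, $\ord(x_1^{a_1}\cdots x_r^{a_r})=a_1\ell_1+\cdots+a_r\ell_r$, is by construction bilinear in the exponent vector, so it gives
$$\ord_{s=s_0}L(\Theta,s)=\ell_1\cdot\ell_1+\cdots+\ell_r\cdot\ell_r=\sum_{j=1}^r\ell_j^2\geq 0.$$
Since this quantity is non-negative, the criterion $u\in\mathbb F[\overline{H(s_0)}]\iff \ord(u)\geq 0$ yields $L(\Theta,s)\in\overline{Hol(s_0)}$. (The displayed value $\sum_j|\ell_j|$ appears to be a misprint for $\sum_j\ell_j^2$, which is what Heilbronn's inner-product computation produces; for the holomorphy conclusion only non-negativity is used, so nothing else in the Remark is affected.)

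Next, I would handle the main equivalence, which is the substantive content. Artin's conjecture at $s_0$ is, by definition, the statement that every $f_j$ is holomorphic at $s_0$, i.e.\ $\ell_j\geq 0$ for all $1\leq j\leq r$. On the other hand, $m_\Theta=x_1^{\ell_1}\cdots x_r^{\ell_r}$ is a Laurent monomial in $\mathbb F[x_1^{\pm 1},\ldots,x_r^{\pm 1}]$, and it belongs to the polynomial subring $\mathbb F[x_1,\ldots,x_r]$ precisely when all of its exponents are non-negative, again $\ell_j\geq 0$ for all $j$. The two conditions are literally identical, so the equivalence is established.

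There is no real obstacle here: the whole Remark is a notational bridge between Heilbronn's classical virtual character and the toric framework of the paper, and the only computation required is the one-line application of bilinearity of $\ord$ together with the observation that a Laurent monomial sits inside the polynomial ring exactly when its exponent vector lies in $\mathbb N^r$. The only thing worth flagging for the reader is the apparent typo $\sum|\ell_j|$ versus the correct $\sum\ell_j^2$, which I would either silently correct or note in parentheses, as above.
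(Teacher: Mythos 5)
Your proposal is correct and follows the only natural route, namely unpacking the definitions of $\ord$, $\overline{H(s_0)}$, and the Laurent-vs-polynomial membership criterion, which is exactly the implicit justification the paper intends for this unproved Remark. You are also right to flag $\sum_{j=1}^r|\ell_j|$ as a misprint: the bilinearity of $\ord$ gives $\ord_{s=s_0}L(\Theta,s)=\sum_{j=1}^r\ell_j\cdot\ord_{s=s_0}f_j=\sum_{j=1}^r\ell_j^2$, which coincides with $\sum_j|\ell_j|$ only when every $\ell_j\in\{-1,0,1\}$.
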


Our aim is to describe the generators of the $\mathbb F$-algebra $\mathbb F[\overline{H(s_0)}]$.
Using the the assumption (\ref{lab5}) and the notations from Theorem $1.4$, we prove the following result:

\begin{prop}
Assume that Artin conjecture fails at $s_0$. The following are equivalent:
\small{
\begin{enumerate}
 \item[(1)] $I_{H(s_0)}=(0)$.
 \item[(2)] $\mathbb F[\overline{H(s_0)}]= \mathbb F[x_1,(x_1^{m_2}x_2)^{\pm 1},\ldots,(x_1^{m_q}x_q)^{\pm 1}, x_{q+1}^{\pm 1},\ldots, x_r^{\pm 1}]$,
where $m_j=-\frac{\ell_j}{\ell_1}$, $2\leq j\leq q$.
\end{enumerate}}
\end{prop}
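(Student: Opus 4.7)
The plan is to prove both implications by combining Theorem 1.4, which gives the explicit description of $\mathbb F[H(s_0)]$ when $I_{H(s_0)}=(0)$, with the intersection identity (\ref{curva}), $\mathbb F[H(s_0)]=\mathbb F[\overline{H(s_0)}]\cap\mathbb F[x_1,\ldots,x_r]$, and the basic order criterion that a monomial $u$ lies in $\mathbb F[\overline{H(s_0)}]$ if and only if $\ord(u)\geq 0$.

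For $(1)\Rightarrow(2)$, I would first invoke Theorem 1.4 to obtain $p=1$ together with $\ell_1\mid\ell_j$ for all $j$, so each $m_j=-\ell_j/\ell_1$ is a nonnegative integer. All the generators on the right-hand side of (2) then lie in $\mathbb F[\overline{H(s_0)}]$: $\ord(x_1^{m_j}x_j)=m_j\ell_1+\ell_j=0$ and $\ord(x_j)=0$ for $j>q$, so these are units of order zero, while $\ord(x_1)=\ell_1>0$. For the reverse inclusion I would take an arbitrary monomial $u=x_1^{a_1}\cdots x_r^{a_r}\in\mathbb F[\overline{H(s_0)}]$ with $a_j\in\mathbb Z$ and rewrite
\[
u=x_1^{a_1-\sum_{j=2}^q m_j a_j}\cdot\prod_{j=2}^q (x_1^{m_j}x_j)^{a_j}\cdot\prod_{j=q+1}^r x_j^{a_j};
\]
the identity $\ord(u)=\ell_1(a_1-\sum_{j=2}^q m_j a_j)$ together with $\ell_1>0$ and $\ord(u)\geq 0$ forces the exponent of $x_1$ to be nonnegative, placing $u$ in the right-hand side.

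For $(2)\Rightarrow(1)$, I would intersect both sides of the equality in (2) with $\mathbb F[x_1,\ldots,x_r]$. By (\ref{curva}) the intersection on the left equals $\mathbb F[H(s_0)]$. On the right, a general monomial $x_1^{\alpha_1}\prod_{j=2}^q(x_1^{m_j}x_j)^{\alpha_j}\prod_{j>q}x_j^{\alpha_j}$ with $\alpha_1\geq 0$ and $\alpha_j\in\mathbb Z$ expands as $x_1^{\alpha_1+\sum m_j\alpha_j}x_2^{\alpha_2}\cdots x_r^{\alpha_r}$, and membership in $\mathbb F[x_1,\ldots,x_r]$ requires every $\alpha_j\geq 0$. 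Collecting the resulting generators then gives $\mathbb F[H(s_0)]=\mathbb F[x_1, x_1^{m_2}x_2,\ldots, x_1^{m_q}x_q, x_{q+1},\ldots,x_r]$, which is the Theorem 1.4 description, so $I_{H(s_0)}=(0)$ follows from that theorem together with Proposition 1.3(2).

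The main obstacle I anticipate is the bookkeeping in $(2)\Rightarrow(1)$: I must ensure that the intersection procedure extracts exactly the $r$ generators prescribed by Theorem 1.4 and no extras. The key observation is that $x_1$ is the only non-invertible generator appearing on the right-hand side of (2), which pins down $p=1$ and in turn ensures $m_j\geq 0$ for every $j=2,\ldots,q$, so that the parameterization $(\alpha_1,\ldots,\alpha_r)\mapsto x_1^{\alpha_1+\sum m_j\alpha_j}x_2^{\alpha_2}\cdots x_r^{\alpha_r}$ becomes an unambiguous bijection onto $H(s_0)$.
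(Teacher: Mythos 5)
Your proposal is correct and follows essentially the same route as the paper: both directions rest on Theorem 1.4, the intersection identity \eqref{curva}, and the order criterion for membership in $\mathbb F[\overline{H(s_0)}]$. Your treatment of the reverse inclusion in $(1)\Rightarrow(2)$ is slightly more streamlined than the paper's, since you rewrite $u$ directly as $x_1^{a_1-\sum_j m_j a_j}\prod_j(x_1^{m_j}x_j)^{a_j}\prod_j x_j^{a_j}$ and let $\ord(u)\ge 0$ force the $x_1$-exponent to be nonnegative, avoiding the case-split on the signs of the $a_j$ and the appeal to $\mathbb F[H(s_0)]\cap\mathbb F[x_1,\ldots,x_q]$ that the paper uses; this is a minor simplification, not a different method.
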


\begin{proof}
$(1)\Rightarrow (2)$ Let $R:=\mathbb F[x_1, (x_1^{m_2}x_2)^{\pm 1},\ldots,(x_1^{m_q}x_q)^{\pm 1}, x_{q+1}^{\pm 1},\ldots, x_r^{\pm 1}]$.
    Since $$\ell_1>0,\; \ell_2,\ldots,\ell_q<0,\; \ell_{q+1}=\cdots=\ell_r=0, \ord(x_1^{m_j}x_j)=\ell_1m_j - \ell_j = 0, (\forall)2\leq j\leq q,$$
     it follows that
    $R\subseteq \mathbb F[\overline{H(s_0)}]$. In order to prove $\supseteq$, let  $u=x_1^{a_1}\cdots x_r^{a_r}\in \mathbb F[\overline{H(s_0)}]$,  where $a_j\in\mathbb Z$, $1\leq j\leq r$.
    Since $\ell_{q+1}=\ldots=\ell_r=0$ and $x_j^{\pm 1}\in R$, $(\forall)q+1\leq j\leq r$, it is enough to prove that
    $$u':=x_1^{a_1}x_2^{a_2}\cdots x_q^{a_q}\in R':=\mathbb F[x_1,(x_1^{m_2}x_2)^{\pm 1},\ldots,(x_1^{m_q}x_q)^{\pm 1}].$$
    Note that $\ord(u)=\ord(u')=\ell_1(a_1-m_2a_2-\cdots-m_qa_q)\geq 0$, therefore 
\begin{equation}\label{cucuc}
a_1\geq m_2a_2-\cdots-m_qa_q. 
\end{equation}
    Without any loss of generality, we can assume that there exists $1\leq s\leq q$ such that $a_2,\ldots,a_s<0$ and $a_{s+1},\ldots,a_q\geq 0$.
    One can easily see that $u'\in R'$ if and only if 
    $$u'':= (x_1^{m_2}x_2)^{-a_2}\cdots (x_1^{m_s}x_s)^{-a_s}u' = x_1^{a_1-m_2a_2-\cdots-m_sa_s}x_{s+1}^{a_{q+1}}\cdots x_q^{a_q} \in R'$$
    On the other hand, from (\ref{curva}) and (\ref{cucuc}) it follows that 
		$u''\in \mathbb F[H(s_0)]\cap \mathbb F[x_1,\ldots,x_q] \subset R'$, as required.

$(2)\Rightarrow (1)$. From (\ref{curva}) it follows that 
$\mathbb F[H(s_0)] = \mathbb F[x_1,x_1^{m_2}x_2,\ldots,x_1^{m_q}x_q,x_{q+1},\ldots,x_r]$, hence, as in the proof of Theorem $1.4$, $I_{H(s_0)}=(0)$.
\end{proof}

Using the assumption $(\ref{lab5})$, we prove the following:

\begin{prop}
 Assume that $f_1,\ldots,f_r$ have only simple zeros or poles at $s_0$. If Artin conjecture does not hold at $s_0$, then
 $$\mathbb F[\overline{H(s_0)}] = \mathbb F[x_1,\ldots,x_{p},x_{q+1}^{\pm 1},\ldots,x_r^{\pm 1}, (x_{j}x_k)^{\pm 1} \;:\; 1\leq j\leq p,\;p+1 \leq k\leq q].$$
\end{prop}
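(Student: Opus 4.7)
The plan is to prove the two inclusions of the asserted equality separately. Denote the right-hand side by $R$. Since the zeros and poles are simple, by (\ref{lab5}) we have $\ell_1 = \cdots = \ell_p = 1$ and $\ell_{p+1} = \cdots = \ell_q = -1$, so each listed generator of $R$ has nonnegative order at $s_0$: $\ord(x_j) = 1$ for $1 \leq j \leq p$, $\ord(x_i^{\pm 1}) = 0$ for $q+1 \leq i \leq r$, and $\ord((x_jx_k)^{\pm 1}) = 0$ for $1 \leq j \leq p$, $p+1 \leq k \leq q$. This yields $R \subseteq \mathbb F[\overline{H(s_0)}]$ at once.

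For the reverse inclusion, I would take an arbitrary Laurent monomial $u = x_1^{a_1}\cdots x_r^{a_r} \in \mathbb F[\overline{H(s_0)}]$, whose defining inequality reads $A \geq B$ with $A := \sum_{j=1}^p a_j$ and $B := \sum_{k=p+1}^q a_k$. Using $x_i^{\pm 1} \in R$ for $i > q$, I reduce to the case $a_{q+1} = \cdots = a_r = 0$. The goal is then to write
$$u = \prod_{j=1}^p x_j^{c_j} \cdot \prod_{j=1}^p \prod_{k=p+1}^q (x_jx_k)^{n_{jk}}$$
with $c_j \in \mathbb N$ and $n_{jk} \in \mathbb Z$ (where negative exponents refer to the inverse generator). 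Matching exponents yields the transportation-style system $a_j = c_j + \sum_k n_{jk}$ for $1 \leq j \leq p$, and $a_k = \sum_j n_{jk}$ for $p+1 \leq k \leq q$.

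The key step is to produce an explicit feasible solution, sidestepping any iterative balancing argument. Since Artin's conjecture fails, $p \geq 1$ and $q > p$, so I fix $k_0 := p+1$. I set $n_{jk_0} := a_j$ and $c_j := 0$ for $2 \leq j \leq p$, with $n_{jk} := 0$ for $j \geq 2$, $k \neq k_0$; then $n_{1k} := a_k$ for $k \neq k_0$, $n_{1k_0} := a_{k_0} - (A - a_1)$, and $c_1 := A - B$. A direct check verifies both families of equations, and crucially $c_1 \geq 0$ precisely because of the hypothesis $A \geq B$. The main obstacle is this combinatorial feasibility, which the construction settles by concentrating all the \emph{transport} between the $x_j$'s and the $x_k$'s through the single column $k_0$.
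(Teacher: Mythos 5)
Your proof is correct, and it takes a genuinely different route from the paper's. The paper handles the reverse inclusion by clearing negative exponents: it multiplies $u' = x_1^{a_1}\cdots x_q^{a_q}$ by suitable powers of the units $(x_jx_k)^{\pm 1}$ (``as in the proof of Proposition~2.2'') until all of $a_1,\ldots,a_q$ are nonnegative, and then invokes Theorem~1.13, which already describes the generators of $\mathbb F[H(s_0)]$ in the simple-zero/simple-pole case. You instead bypass both the iterative reduction and the appeal to Theorem~1.13 by writing down an explicit integral solution $(c_j,n_{jk})$ to the transportation system $a_j = c_j + \sum_k n_{jk}$, $a_k = \sum_j n_{jk}$, with $c_j\geq 0$ guaranteed directly by $\ord(u)=A-B\geq 0$ and all the transfer routed through the single column $k_0=p+1$. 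I checked the exponent matching and the sign of $c_1$: both are fine, and the hypothesis that Artin's conjecture fails at $s_0$ is exactly what guarantees $p\geq 1$ and $q>p$, so $k_0$ exists. The paper's proof buys a tighter connection to the semigroup-ring structure already established in Section~1; yours is self-contained, shorter, and makes the combinatorial feasibility transparent. One small presentational point: it would be worth stating explicitly that the $(x_jx_k)$ are units of $R$, so arbitrary integer $n_{jk}$ are indeed admissible, while the $x_j$ for $1\leq j\leq p$ are not units, which is why you must check $c_j\geq 0$.
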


\begin{proof}
Let $R:=\mathbb F[x_1,\ldots,x_{p},x_{q+1}^{\pm 1},\ldots,x_r^{\pm 1}, (x_{j}x_k)^{\pm 1} \;:\; 1\leq j\leq p,\;p+1 \leq k\leq q]$. It is easy to check
that $R\subset \mathbb F[\overline{H(s_0)}]$. Let $u=x_1^{a_1}\cdots x_r^{a_r}\in \mathbb F[\overline{H(s_0)}]$,  where $a_j\in\mathbb Z$, $1\leq j\leq r$.
We have that $\ord(u)=a_1+\cdots+a_p - a_{p+1} -\cdots - a_q \geq 0$. Let $u':=x_1^{a_1}\cdots x_q^{a_q}$. Since $x_j^{\pm 1}\in R$, $(\forall)q+1\leq j\leq r$,
then $u\in R$ if and only if $$u'\in R':=\mathbb F[x_1,\ldots,x_{p},(x_{j}x_k)^{\pm 1} \;:\; 1\leq j\leq p,\;p+1 \leq k\leq q].$$
As in the proof of Proposition $2.2$, we can assume that $a_1,\ldots,a_q\geq 0$. From Theorem $1.13$, it follows that $u'\in \mathbb F[H(s_0)]\cap \mathbb F[x_1,\ldots,x_q] \subset R'$
as required.
\end{proof}

\newpage

{}

\vspace{2mm} \noindent {\footnotesize
\begin{minipage}[b]{15cm}
Mircea Cimpoea\c s, Simion Stoilow Institute of Mathematics, Research unit 5, P.O.Box 1-764,\\
Bucharest 014700, Romania, E-mail: mircea.cimpoeas@imar.ro
\end{minipage}}

\end{document}